\newtheorem {theorem} {Theorem}
\newcommand{\R}{\ensuremath{\mathbb{R}}}
\newcommand{\aS}{\ensuremath{\mathbb{S}}}
\newcommand{\D}{\ensuremath{\mathbb{D}}}
\newcommand{\X}{\mathcal{X}}
\newcommand{\Y}{\mathcal{Y}}
\newcommand{\p}{\partial }
\newcommand{\la}{\lambda}
\begin{document}

\title[Integrability and dynamics of the Basener-Ross population model]
{Darboux integrability and dynamics\\ of the Basener-Ross population model}

\author[F. G\"ung\"or, J. Llibre  and C. Pantazi]
{Faruk G\"ung\"or$^1$, Jaume Llibre$^2$ and Chara Pantazi$^3$}

\address{$^1$ Department of Mathematics, Faculty of Science and Letters, Istanbul Technical University, 34469 Istanbul, Turkey}
\email{gungorf@itu.edu.tr}

\address{$^2$ Departament de Matem\`{a}tiques, Universitat Aut\`{o}noma de Barcelona, 08193 Bellaterra, Barcelona, Catalonia, Spain}
\email{jllibre@mat.uab.cat}

\address{$^3$ Departament de Matem\`atica Aplicada I, Universitat Polit\`ecnica de Cata\-lunya, (EPSEB), Av. Doctor Mara\~{n}\'on, 44--50, 08028 Barcelona, Spain}
\email{chara.pantazi@upc.edu}

\subjclass[2010]{Primary: 34C05 34C23}

\keywords{Basener-Ross population model, quadratic system, Poincar\'e disc, Darboux integrability, Darboux invariant}
\date{}
\dedicatory{}

\maketitle

\begin{abstract}
We deal with the Basener and Ross model for the evolution of human population in Easter island. We study the Darboux integrability of this model and  characterize all its global dynamics in the Poincar\'e disc, obtaining $15$ different topological phase portraits.
\end{abstract}

\section{Introduction and statement of the main results}

In order to explore the evolution of ecosystems an isolated island is a good laboratory due to the total absence of external distortion factors like migration. Basener and Ross, see equation (5) in \cite{BR}, proposed the following model for the evolution of human population in Easter island
\begin{equation}\label{e1}
\dot{x}=x(1-y),  \quad \dot{y}=(h-1)y^2+(1-c)y+\frac{c}{k}x,
\end{equation}
where $c, k, h$ are positive constants, $c$ is the growth rate of the recourses, $k$ the carrying capacity of the population, $h$ is the harvesting constant, $y(t)$ is the quotient between the amount of resources $x(t)$ and the human population in the island at time $t$.

It is known that the population of Easter Island grew regularly for some time and then diminished very rapidly; to the extent that humans almost disappeared from the island. Basener and Ross \cite{BR} provided the mathematical model \eqref{e1} that allowed to explain this type of behavior. With respect to other predator-prey models the Basener--Ross model shows a rich variety of dynamical behaviors, allowing the extinction in finite time. Therefore it has been considered as an acceptable model for the evolution of population in ancient civilizations and some generalizations have been done and studied, for instance see \cite{BB,BF,BM,Ko} and the references therein. On the other hand, the Basener--Ross model has been extended considering that its ecological parameters can change with the time, see for instance \cite{Am,FF,GT,HB,SR,ZH}.

Nucci and Sanchini \cite{NS} applied the Lie group theory to system \eqref{e1} and proved that this system can be integrated by quadrature for some values of the parameters. They also provided a comparison analysis with the qualitative study given by Basener and Ross.

We have two objectives, first to study the Darboux first integrals of system \eqref{e1}, and second to characterize all the phase portraits of the differential system \eqref{e1}, thus completing the initial qualitative analysis done by Basener and Ross. More precisely, we will classify all the phase portraits of the differential system \eqref{e1} in the Poincar\'e disc. Thus, in particular we control all the orbits which come or go to infinity, which never were studied previously for system \eqref{e1}.

By scaling the time and the $x$ coordinate we can reduce the study to the values of the parameters $k=c$, and the analysis of the differential system \eqref{e1} is reduced to study the differential system
\begin{equation*}
\dot{x}=x(1-y),  \quad \dot{y}=(h-1)y^2+(1-c)y+x.
\end{equation*}
Additionally, setting $b=h-1$ we have
\begin{equation}\label{e2}
\dot{x}=x(1-y)=P(x,y),  \quad \dot{y}=by^2+(1-c)y+x=Q(x,y),
\end{equation}
with $b>-1$ and $c>0$.

Darboux [38] showed how can be constructed the first integrals of planar polynomial vector fields possessing sufficient invariant algebraic curves.  System \eqref{e2} is {\it integrable} on an open subset $U$ of $\R^2$ if there exists a nonconstant analytic function $H :U \rightarrow \R$, called a {\it first integral} of the system on $U$, which is constant on all solution curves $(x(t), y(t))$ of system \eqref{e2} contained in $U$. We say that an analytic function $H(x,y,t): U\times \R \rightarrow \R$ is an {\it invariant} of system \eqref{e2} on $U$, if $H(x,y,t)= \mbox{constant}$ for all values of $t$ for which the solution $(x(t), y(t))$ is defined and contained in $U$. If an invariant $H$ is independent of $t$ then, of course, it is a first integral.

The knowledge provided by an invariant is weaker than the one provided by a first integral. The invariant, in general, only gives
information about either the $\alpha$-- or the $\omega$--limit set of the orbits of the system (see for instance \cite{LO}), while the level curves of a first integral contain the orbits of the system.

Let $f=f(x,y)$ be a real polynomial in the variables $x$ and $y$. The algebraic curve $f(x,y)=0$ is an {\it invariant algebraic curve} of system \eqref{e2} if for some polynomial $K=K(x,y)$ we have
\begin{equation}\label{e3}
P\frac{\p f}{\p x}+ Q\frac{\p f}{\p y}= Kf.
\end{equation}
The polynomial $K$ is called the {\it cofactor} of the invariant algebraic curve $f=0$. We note that since the polynomial system has degree $m$, then any cofactor has at most degree $m-1$. Since on the points of the algebraic curve $f=0$ the gradient $(\p f/\p x$, $\p f/\p y)$ of the curve is orthogonal to the vector field $(P,Q)$ associated to system \eqref{e2}, the vector field $(P,Q)$ is tangent to the curve $f=0$ at every point of this curve. Hence, the curve $f=0$ is formed by orbits of system \eqref{e2}. This justifies the name of invariant algebraic curve given to the algebraic curve $f=0$ satisfying (\ref{e3}) for some polynomial $K$, because it is {\it 	invariant} under the flow defined by system \eqref{e2}.

In the next theorem we summarize the basic results on the Darboux theory of integrability that we shall use in this paper, for a proof see \cite{Da} or Chapter 8 of \cite{DLA}.

\begin{theorem}\label{t0}
Suppose that a polynomial system
\begin{equation}\label{e4}
\dot x=P(x,y), \qquad \dot y=Q(x,y),
\end{equation}
admits $p$ irreducible invariant algebraic curves $f_i=0$ with cofactors $K_i$ for $i=1,\ldots,p$.
\begin{itemize}
\item[(a)] There exist $\la_i$'s in $\R$ not all zero such that $\mathop{\sum}\limits_{i=1}^p\la_i K_i =0$, if and only if the function
\begin{equation}\label{e5}
f_1^{\la_1}\cdots f_p^{\la_p}
\end{equation}
is a first integral of system \eqref{e4}.
	
\item[(b)] There exist $\la_i$'s in $\R$ not all zero such that $\mathop{\sum}\limits_{i=1}^p\la_i K_i =-s$ for some $s\in \R\setminus \{0\}$, if and only if the function 	
\begin{equation}\label{e6}
f_1^{\la_1}\cdots f_p^{\la_p} e^{st}
\end{equation}
is an invariant of system \eqref{e4}.
\end{itemize}
\end{theorem}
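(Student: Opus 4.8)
The plan is to differentiate the proposed first integral, respectively invariant, along the flow of \eqref{e4} and to read off the condition on the cofactors. Write $\X=P\,\p x+Q\,\p y$ for the vector field associated to \eqref{e4}, so that the invariance relation \eqref{e3} becomes $\X f_i=K_i f_i$ for $i=1,\dots,p$. On the open set $U$ where no $f_i$ vanishes (replacing $f_i$ by $|f_i|$ on each connected component of $U$, on which it has constant sign, so that real powers are defined) the function $F=f_1^{\la_1}\cdots f_p^{\la_p}$ is smooth and nowhere zero, and computing its logarithmic derivative along the flow gives
\[
\frac{\X F}{F}=\sum_{i=1}^{p}\la_i\,\frac{\X f_i}{f_i}=\sum_{i=1}^{p}\la_i K_i,
\qquad\text{hence}\qquad
\X F=\Bigl(\sum_{i=1}^{p}\la_i K_i\Bigr)F .
\]

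For item (a): if $\sum_i\la_i K_i\equiv 0$ then $\X F\equiv 0$, so $F$ is constant on each solution curve contained in $U$; moreover $F$ is nonconstant, since distinct irreducible polynomials are multiplicatively independent and therefore $F$ reduces to a constant only when every $\la_i=0$, which is excluded by hypothesis. Thus $F$ is a first integral. Conversely, if $F$ is a first integral then $\X F\equiv 0$ on $U$, and since $F$ is nowhere zero on $U$ we get that $\sum_i\la_i K_i$ vanishes on the nonempty open set $U$; being a polynomial, it then vanishes identically on $\R^2$. For item (b) I would set $G(x,y,t)=F(x,y)\,e^{st}$ and compute its total time derivative along a solution of \eqref{e4}, obtaining
\[
\dot G=(\X F)\,e^{st}+sF\,e^{st}=\Bigl(\sum_{i=1}^{p}\la_i K_i+s\Bigr)G .
\]
If $\sum_i\la_i K_i=-s$ then $\dot G\equiv 0$, so $G$ is constant along solutions, i.e. an invariant; conversely, $\dot G\equiv 0$ together with $G\neq 0$ forces $\sum_i\la_i K_i+s\equiv 0$ on $U$, hence identically.

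The computations above are routine; the point that deserves attention, and the only place where the hypothesis that the $f_i$ are irreducible (and distinct) is genuinely used, is the implication \emph{$F$ constant $\Rightarrow$ all $\la_i=0$}, that is, the multiplicative independence of distinct irreducible polynomials, together with the elementary remark that a polynomial vanishing on a nonempty open subset of $\R^2$ vanishes identically. These two facts are what upgrade the two easy one-directional implications into the stated equivalences.
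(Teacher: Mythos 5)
Your argument is correct and is essentially the standard proof of this result: the paper itself does not prove Theorem \ref{t0} but refers to \cite{Da} and Chapter 8 of \cite{DLA}, where the proof is exactly this computation of the logarithmic derivative of $f_1^{\la_1}\cdots f_p^{\la_p}$ (times $e^{st}$ in case (b)) along the vector field, using $\X f_i=K_if_i$. Your two supporting remarks — that a polynomial vanishing on a nonempty open set vanishes identically, and that nonconstancy of the product follows from the multiplicative independence of distinct irreducible polynomials (the only place where irreducibility is used) — are precisely the points made in that reference, so nothing is missing.
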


The first integrals of the form \eqref{e5} are called the {\it Darboux first integrals}, and the invariants of the form \eqref{e6} are called the {\it Darboux invariants}.

Our main result on the Darboux integrability of the Basener--Ross differential system \eqref{e2} is the following.

\begin{figure}[ht]
$$
\begin{array}{ccc}
\epsfig{file=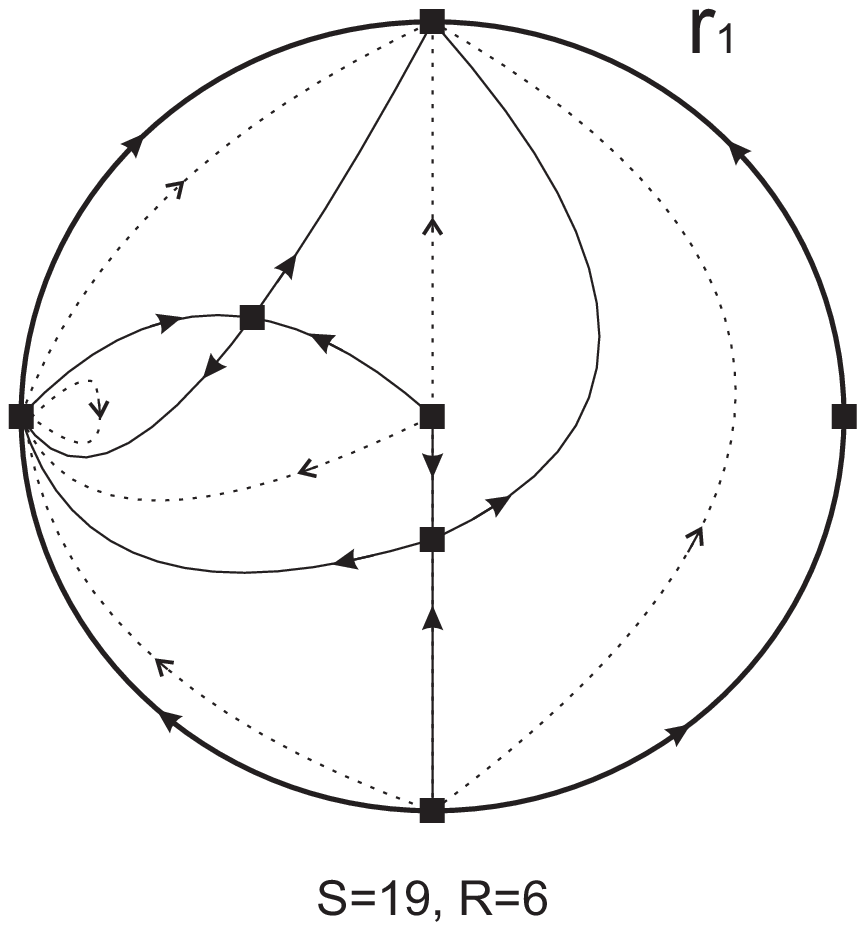,width=4.5cm,height=4.5cm} &
\epsfig{file=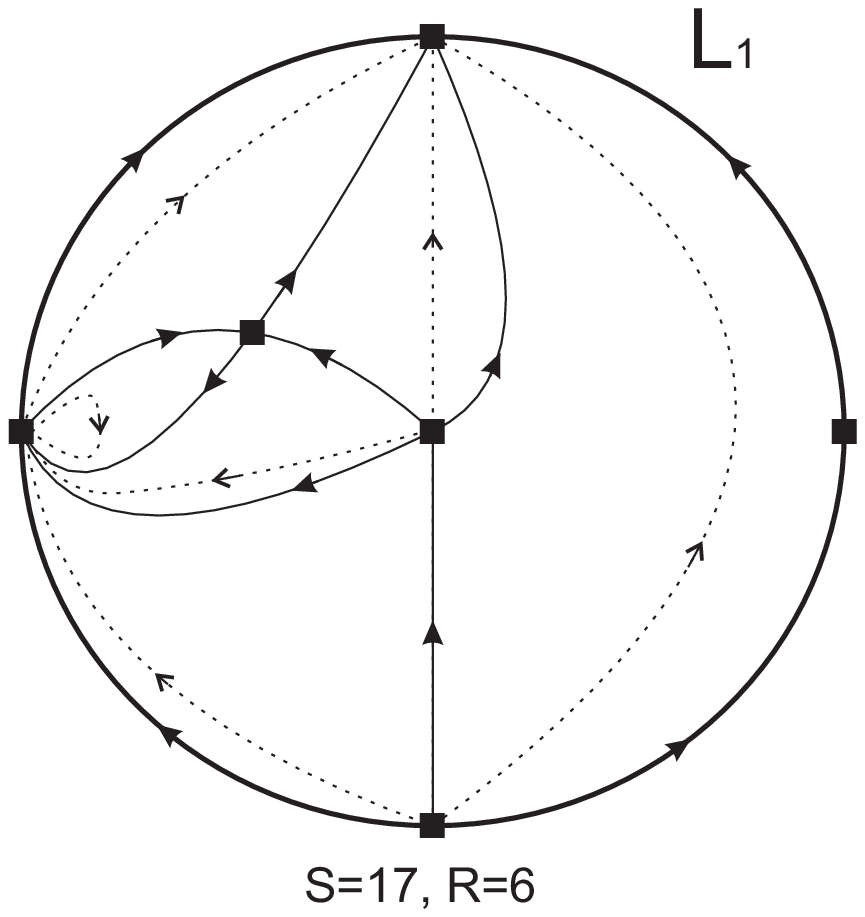,width=4.5cm,height=4.5cm} &
\epsfig{file=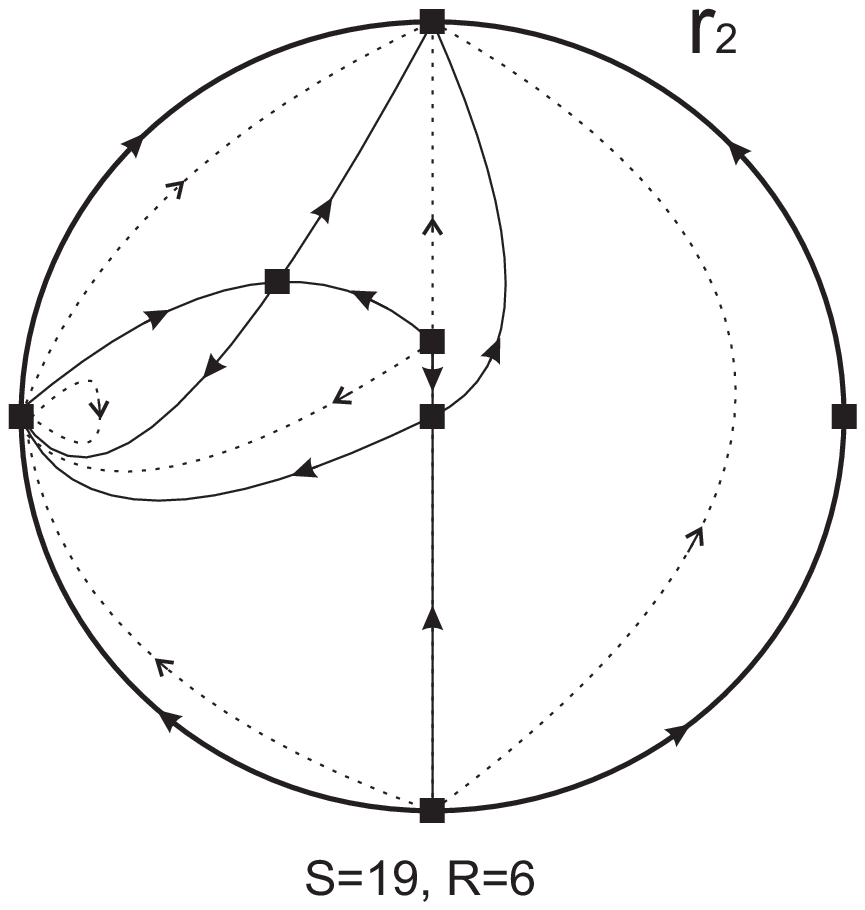,width=4.5cm,height=4.5cm} \\
	
\epsfig{file=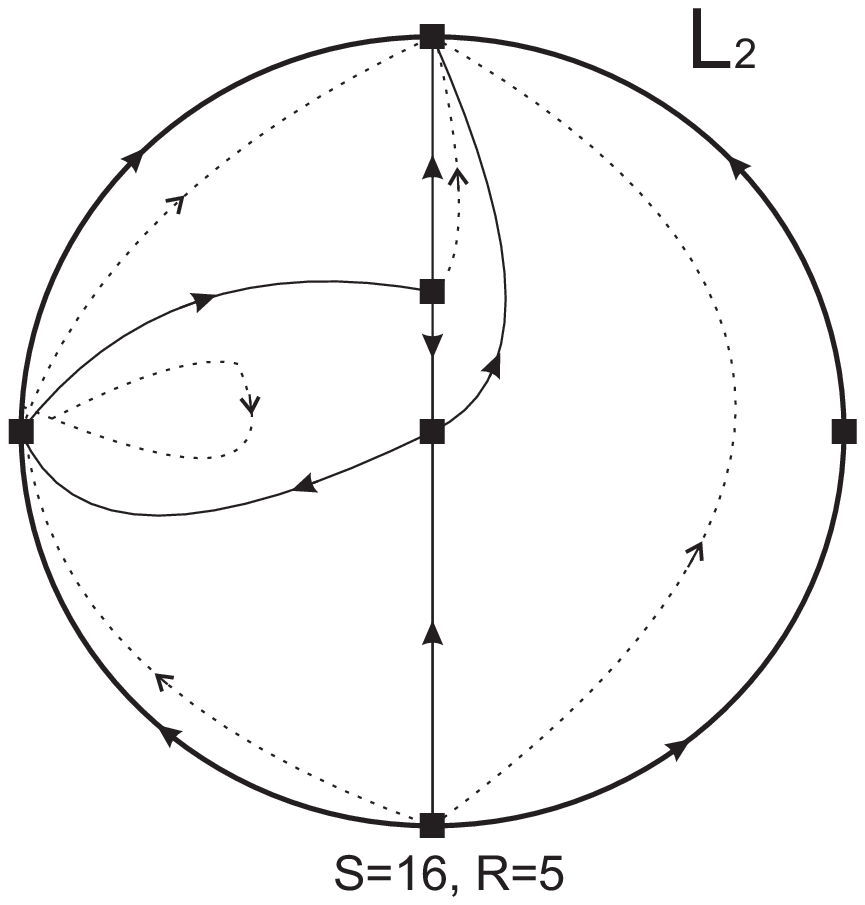,width=4.5cm,height=4.5cm} &
\epsfig{file=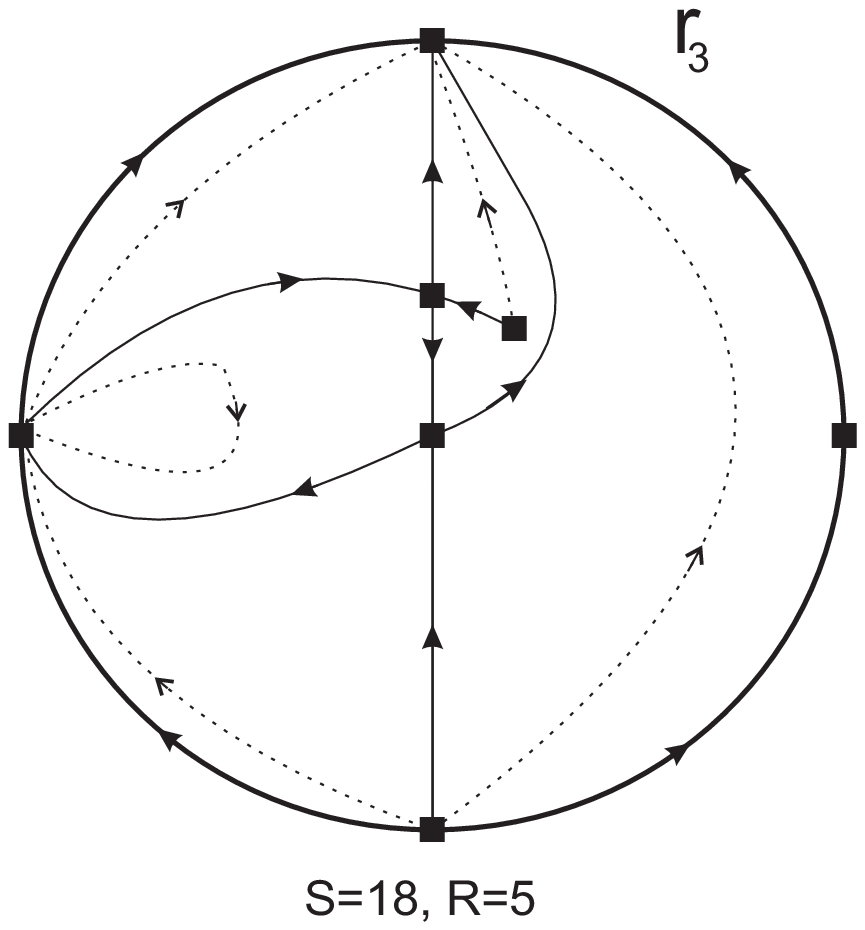,width=4.5cm,height=4.5cm}&
\epsfig{file=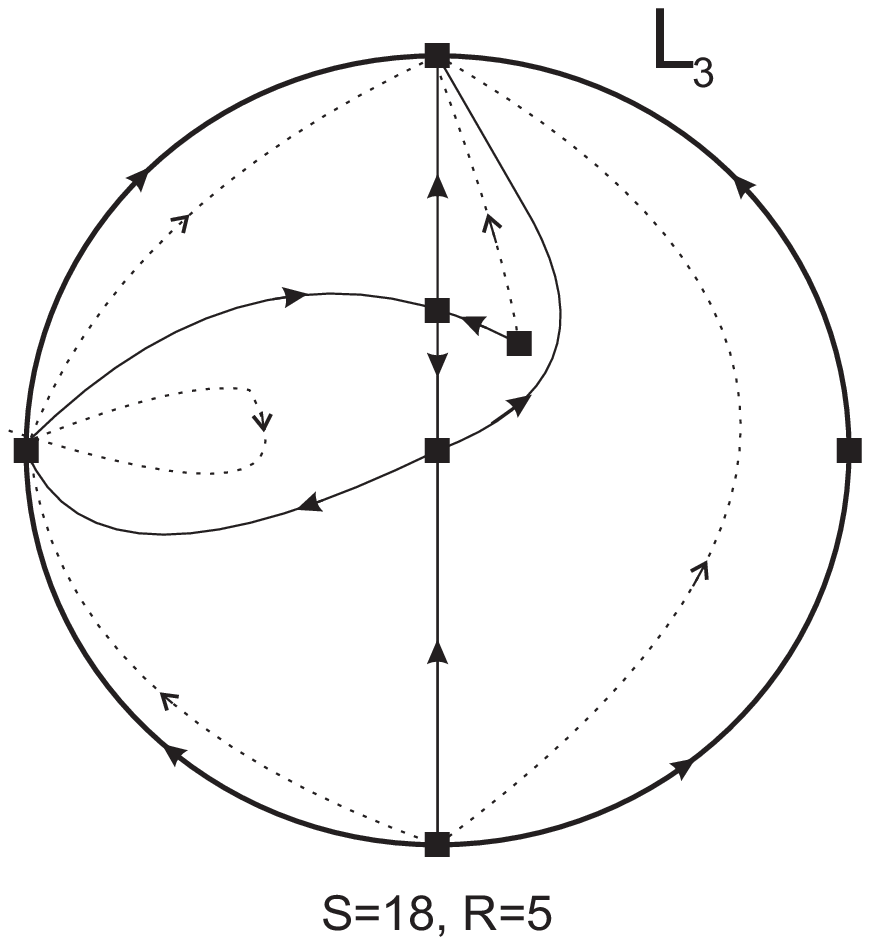,width=4.5cm,height=4.5cm}\\
	
\epsfig{file=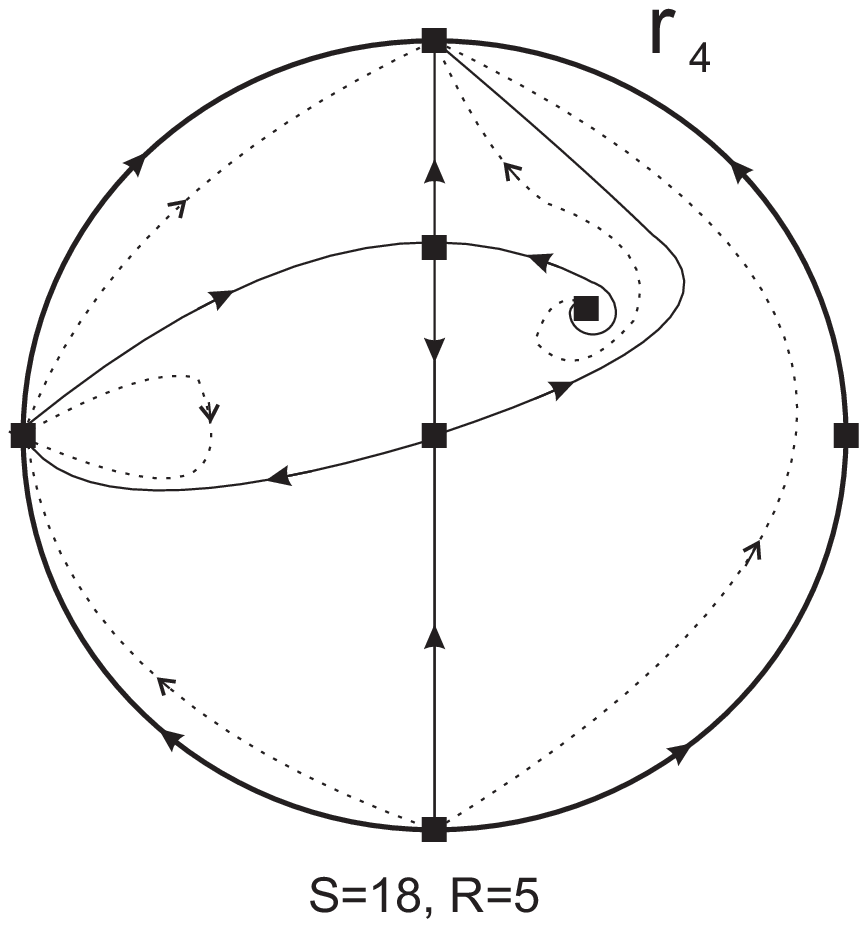,width=4.5cm,height=4.5cm} &
\epsfig{file=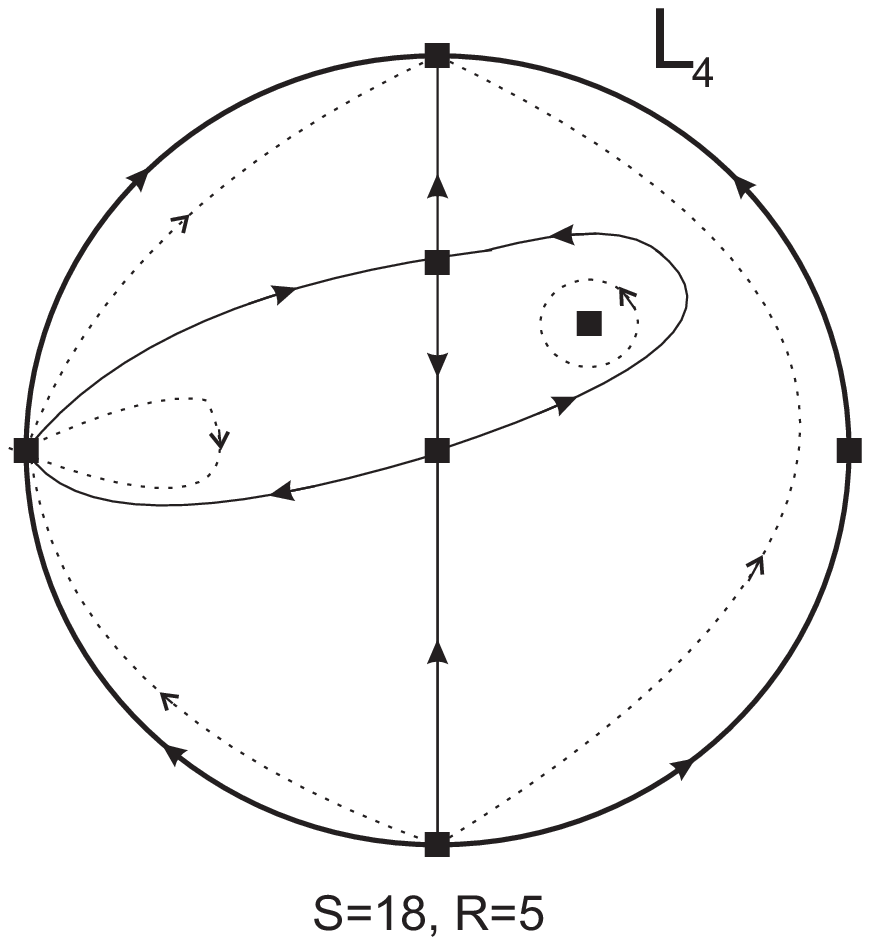,width=4.5cm,height=4.5cm}&
\epsfig{file=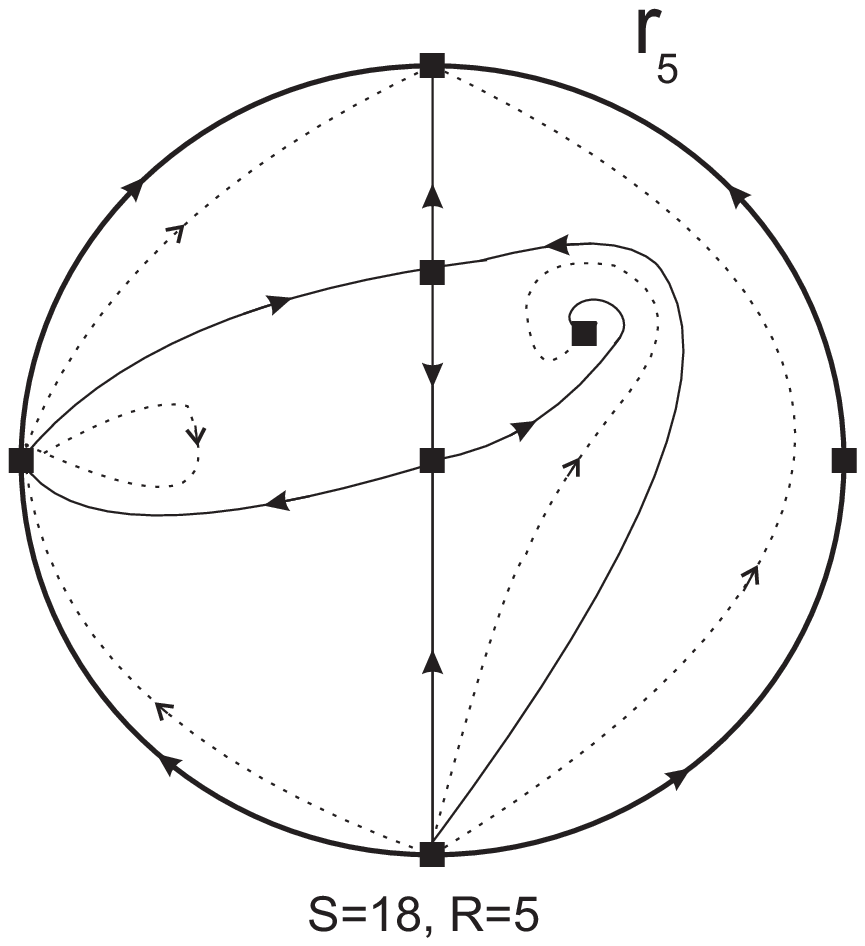,width=4.5cm,height=4.5cm}
\end{array}
$$
\caption{Global phase portraits of system \eqref{e2}.}
\label{f2}
\end{figure}

\begin{figure}[ht]
$$
\begin{array}{ccc}
\epsfig{file=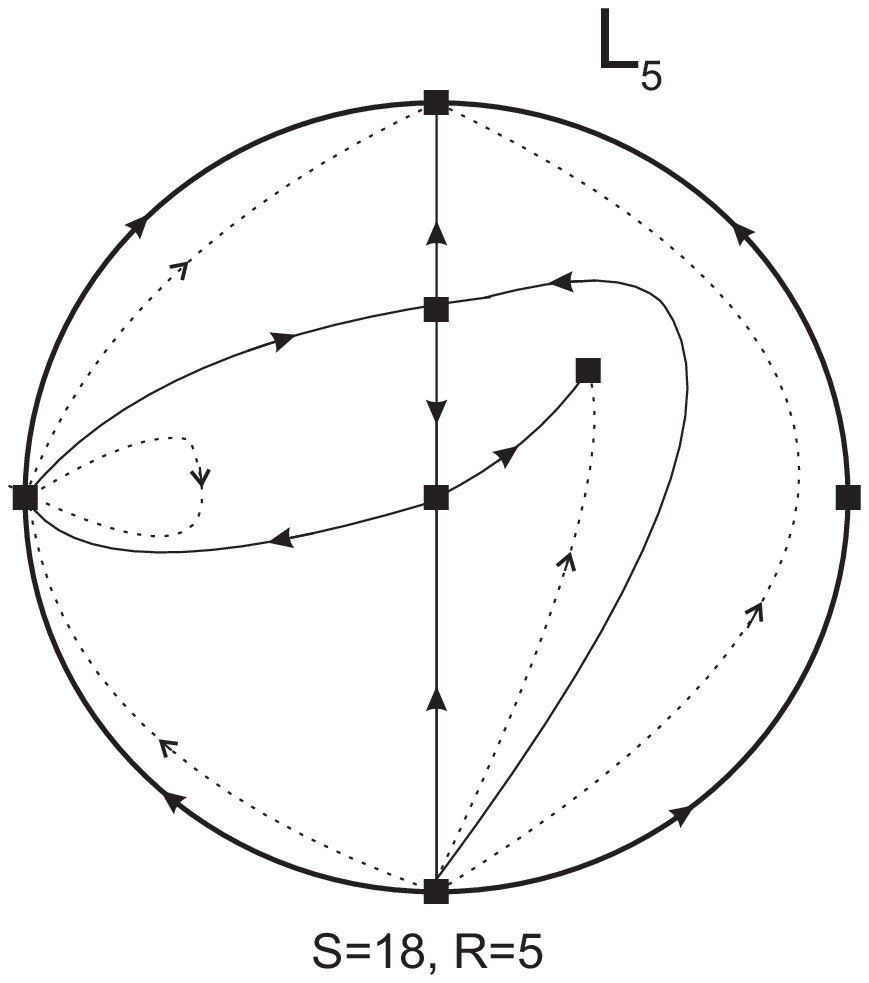,width=4.5cm,height=4.5cm} &
\epsfig{file=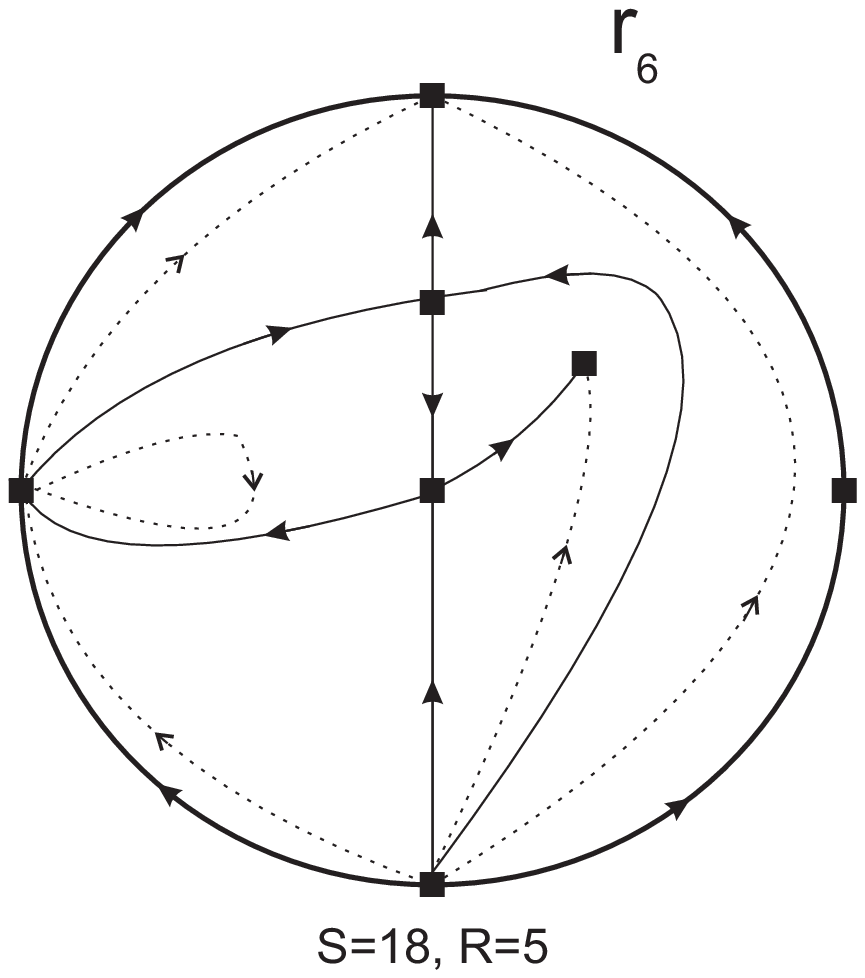,width=4.5cm,height=4.5cm }&
\epsfig{file=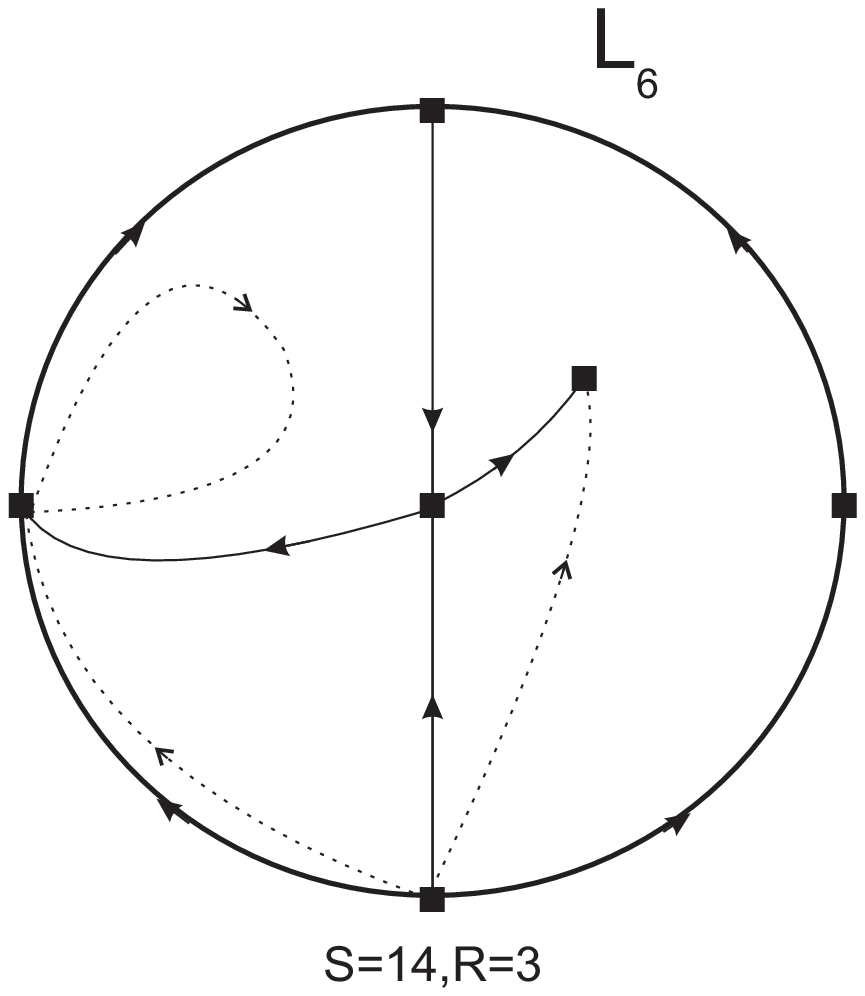,width=4.5cm,height=4.5cm}\\
	
\epsfig{file=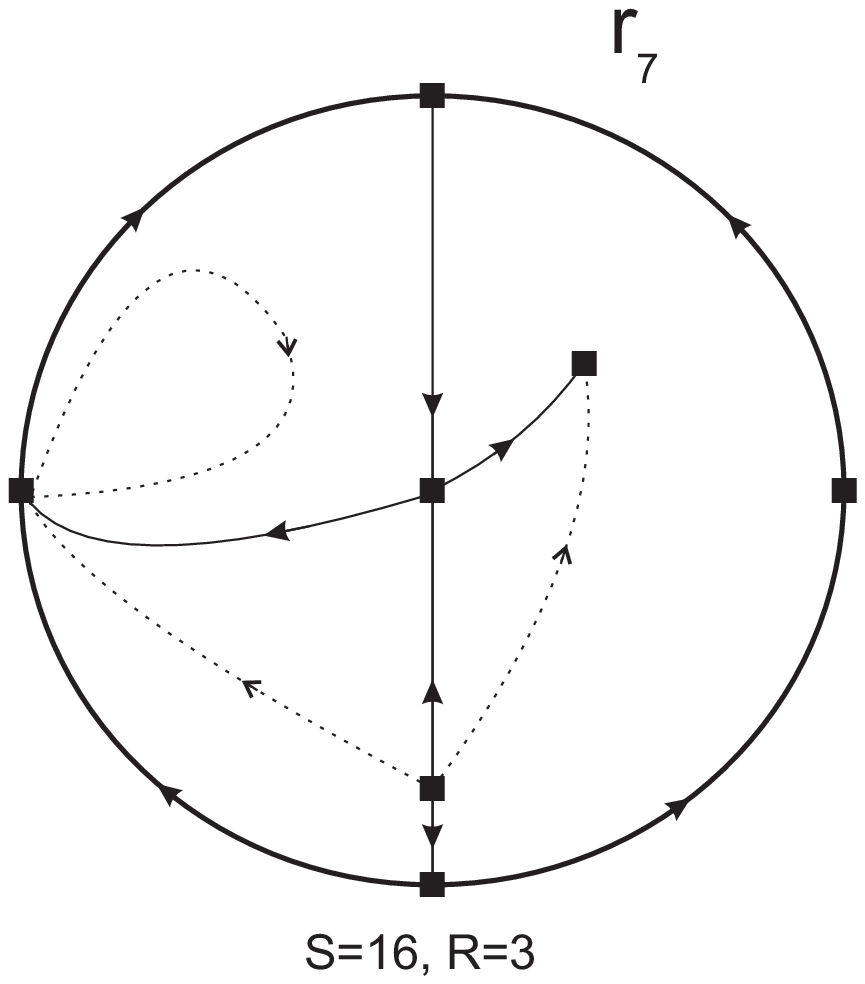,width=4.5cm,height=4.5cm} &
\epsfig{file=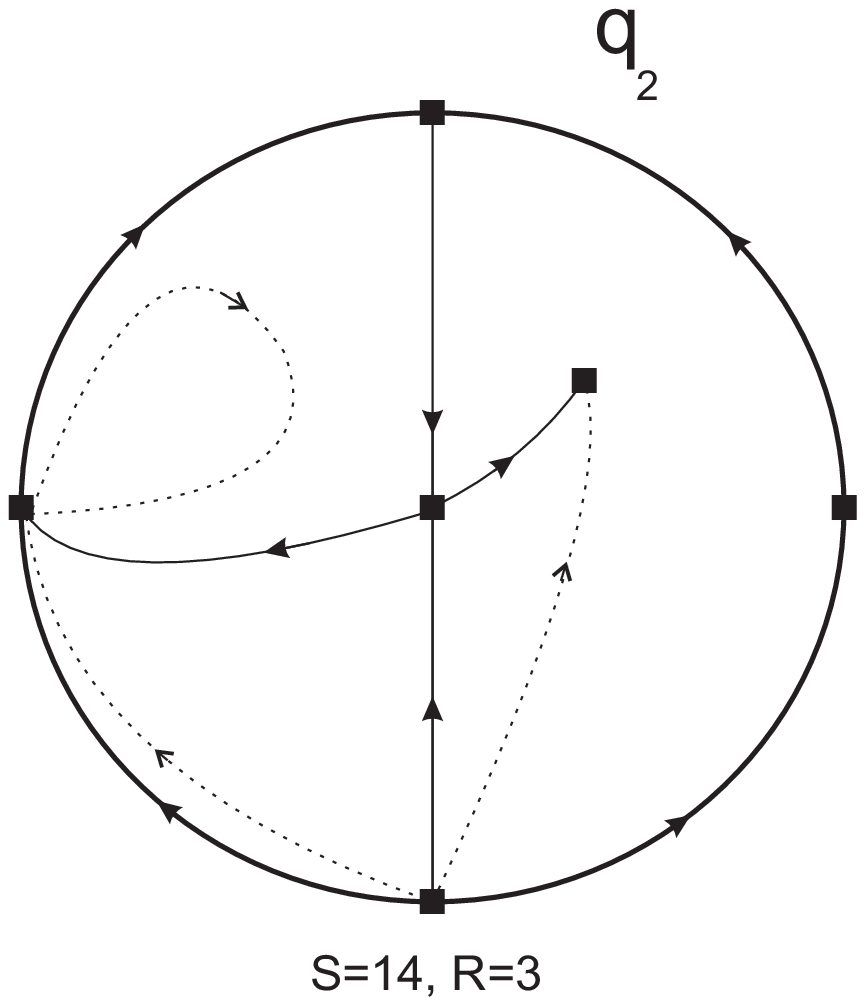,width=4.5cm,height=4.5cm }&
\epsfig{file=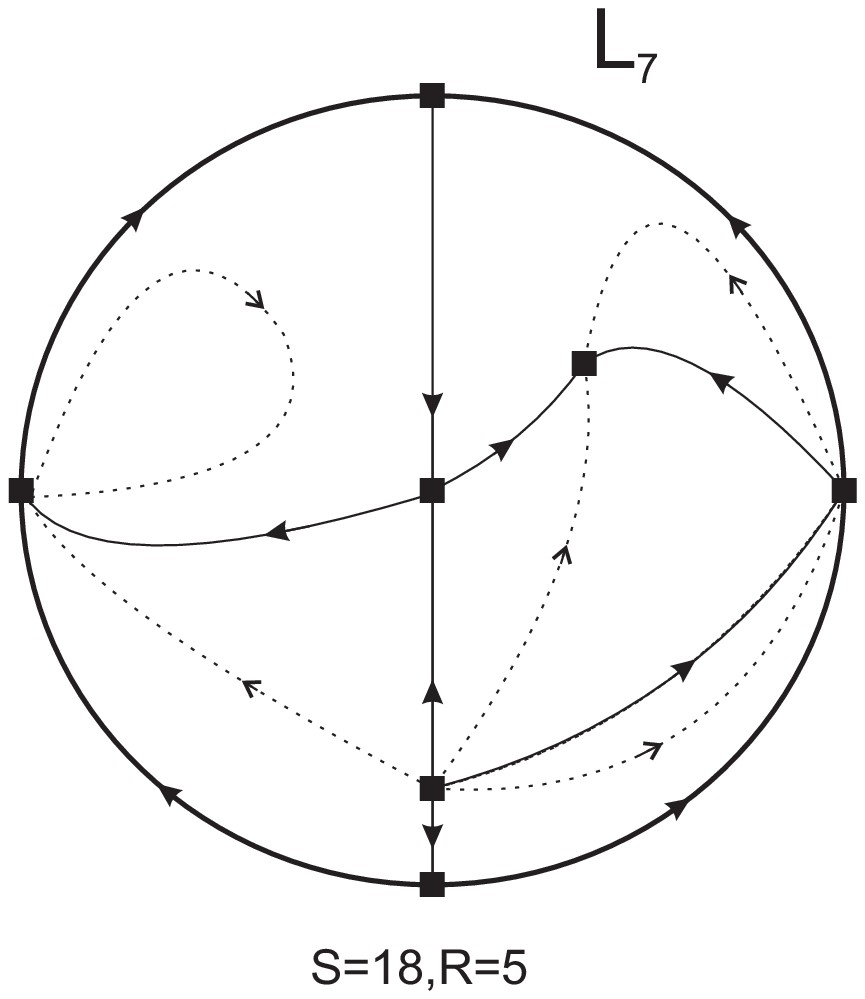,width=4.5cm,height=4.5cm}\\
	
\epsfig{file=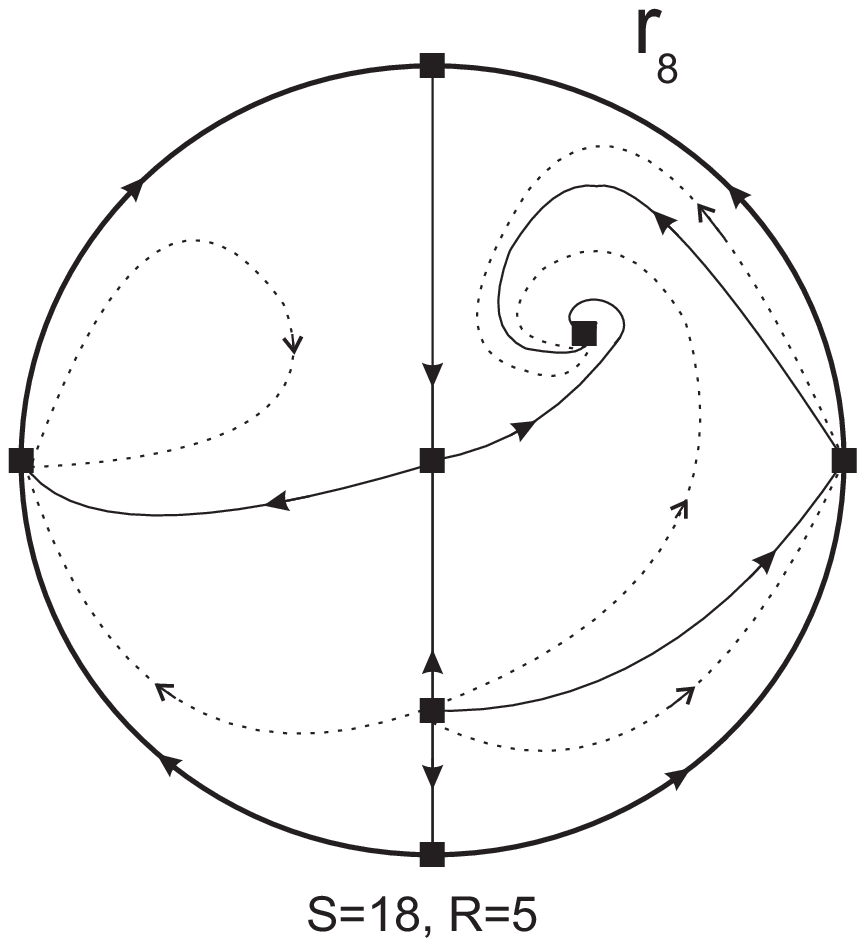,width=4.5cm,height=4.5cm} &
\epsfig{file=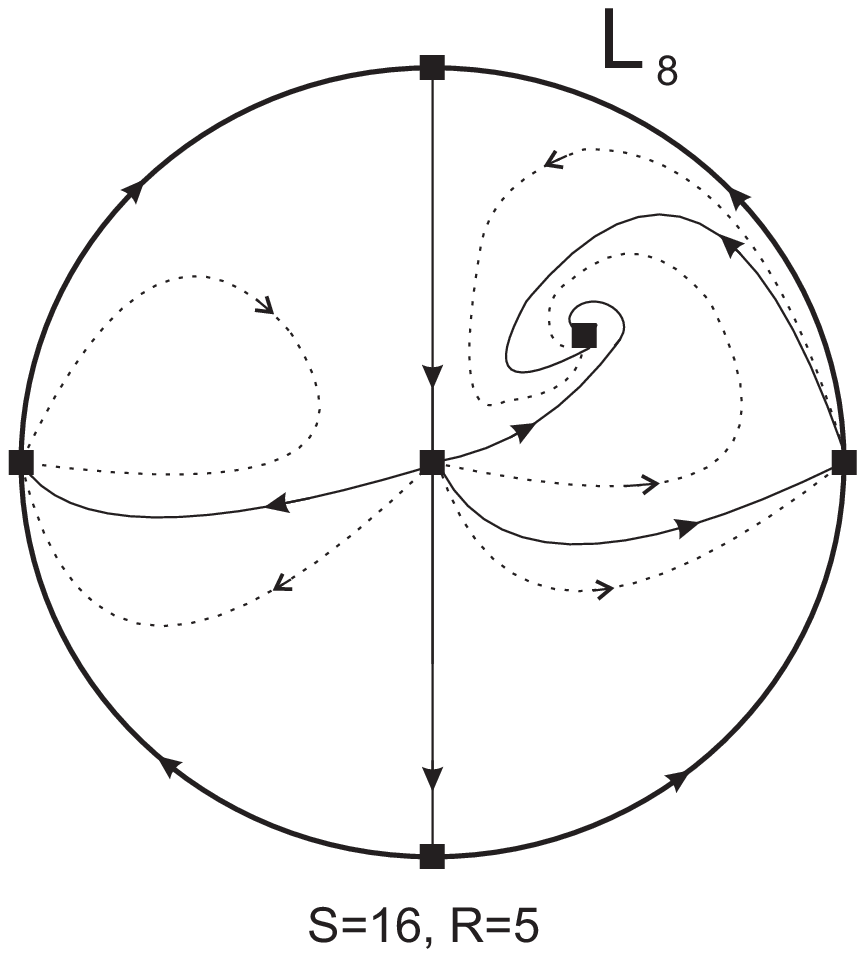,width=4.5cm,height=4.5cm} &
\epsfig{file=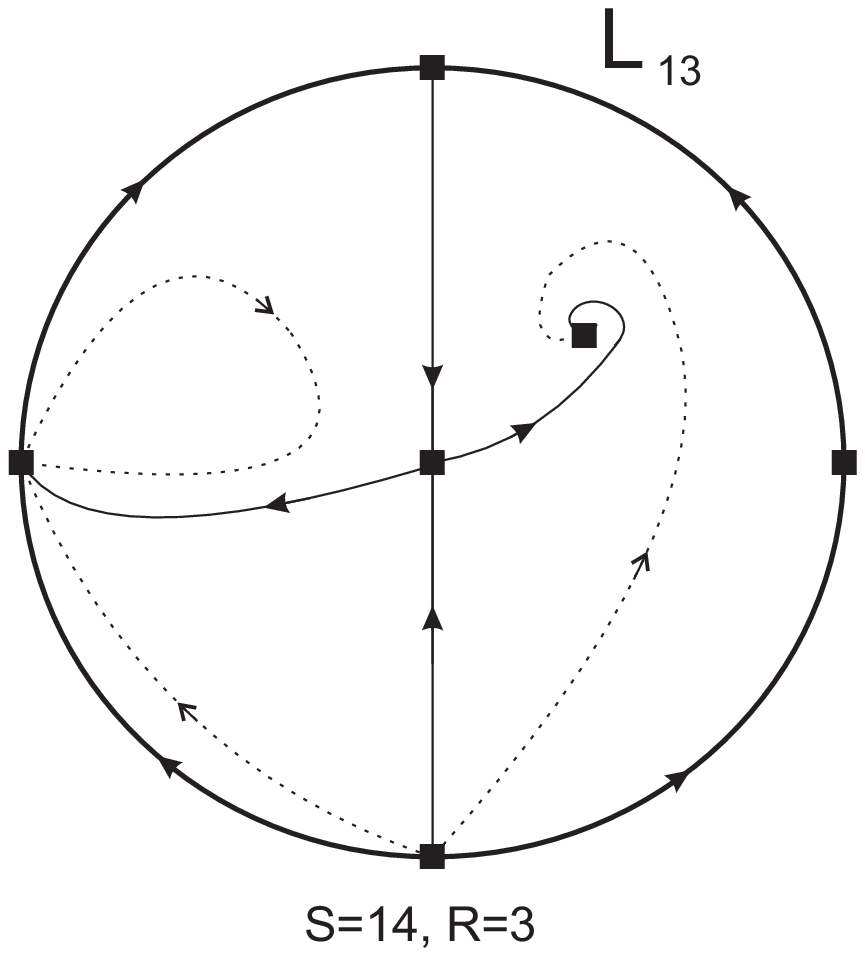,width=4.5cm,height=4.5cm}
\end{array}
$$
\caption{Global phase portraits of systems \eqref{e2}.}
\label{f3}
\end{figure}

\begin{figure}[ht]
$$
\begin{array}{ccc}
	
\epsfig{file=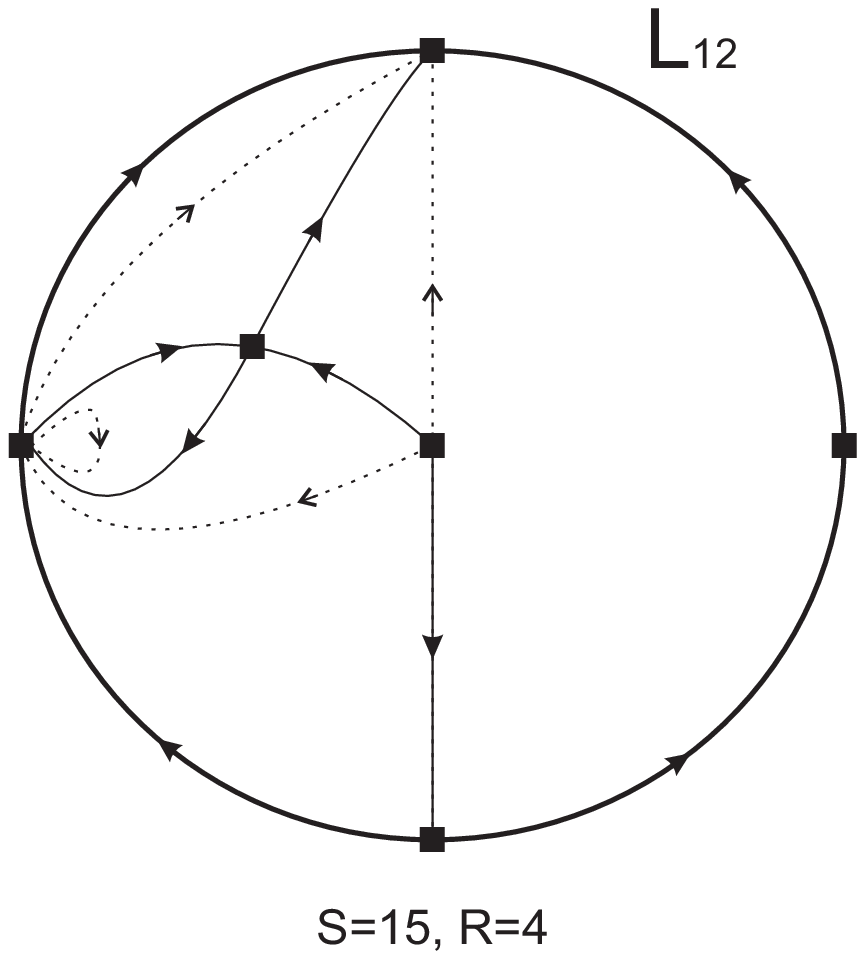,width=4.5cm,height=4.5cm} &
\epsfig{file=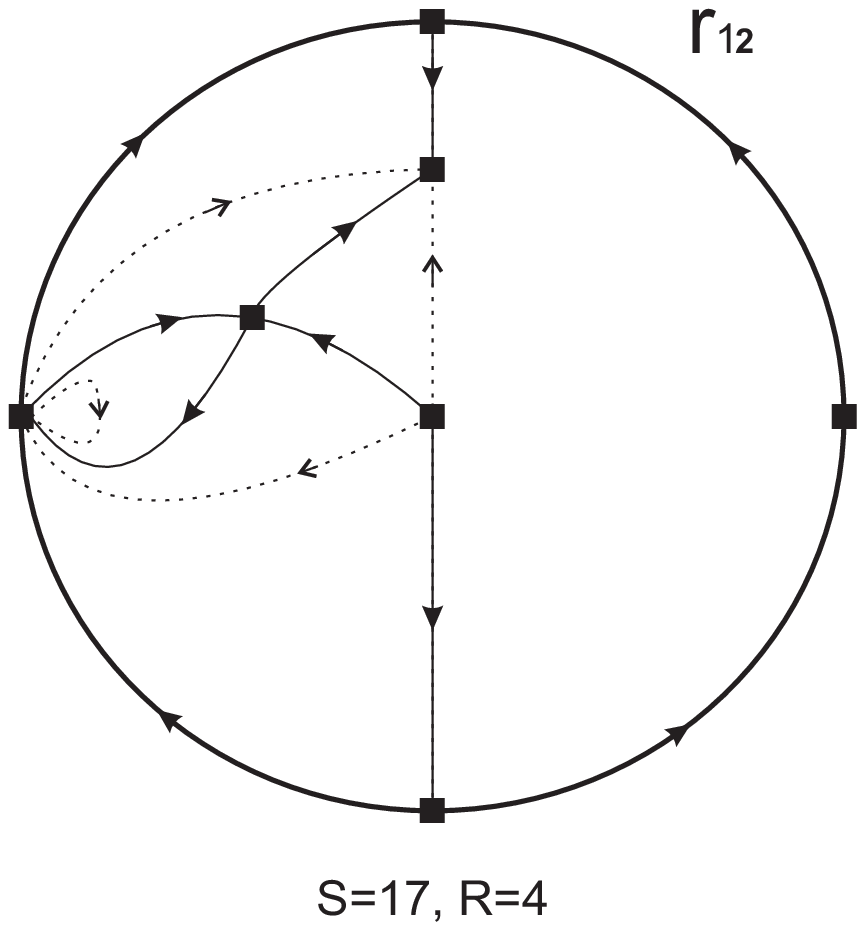,width=4.5cm,height=4.5cm} &
\epsfig{file=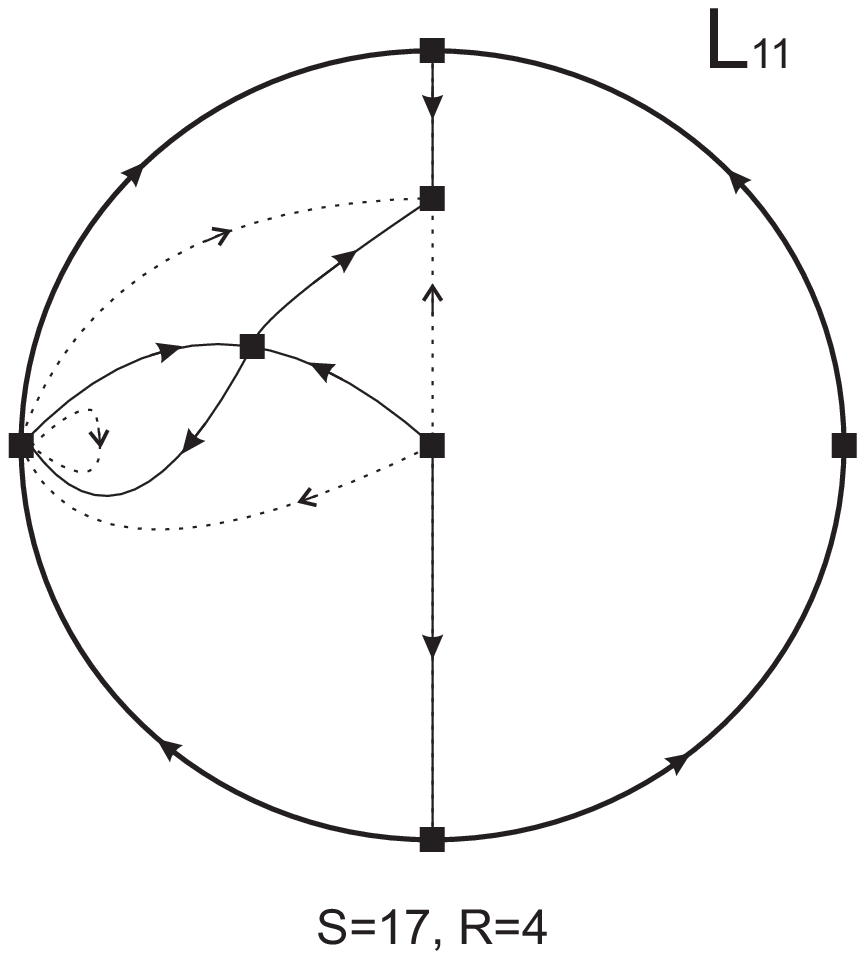,width=4.5cm,height=4.5cm} \\
	
\epsfig{file=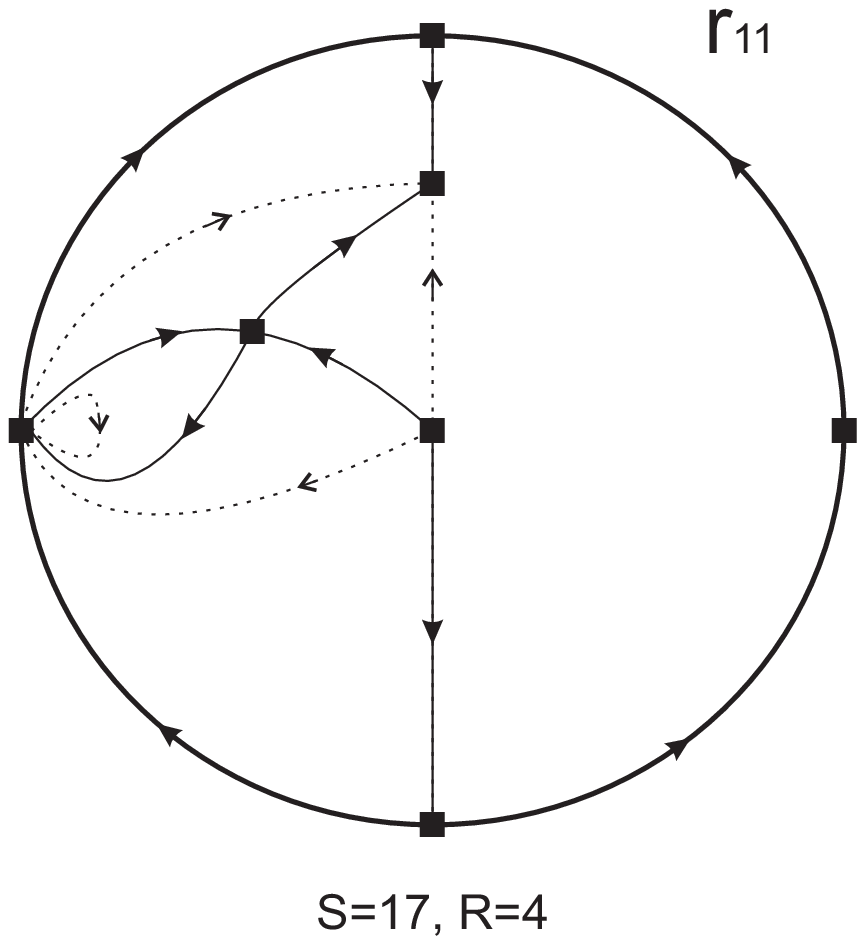,width=4.5cm,height=4.5cm} &
\epsfig{file=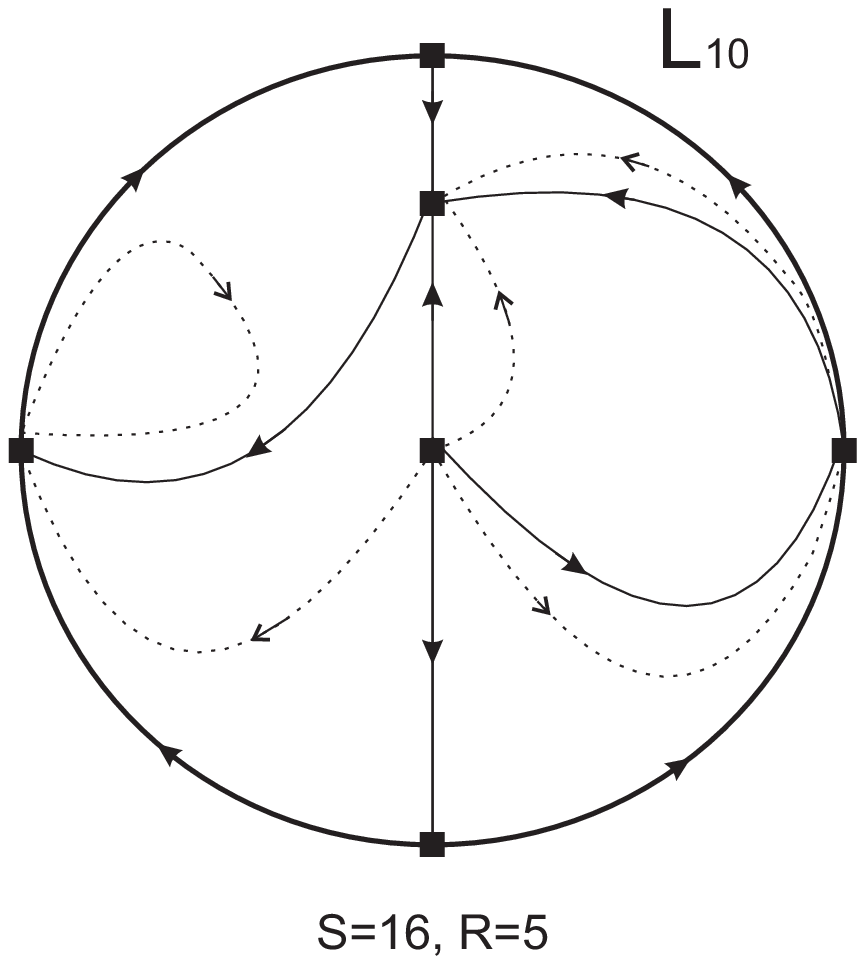,width=4.5cm,height=4.5cm} &
\epsfig{file=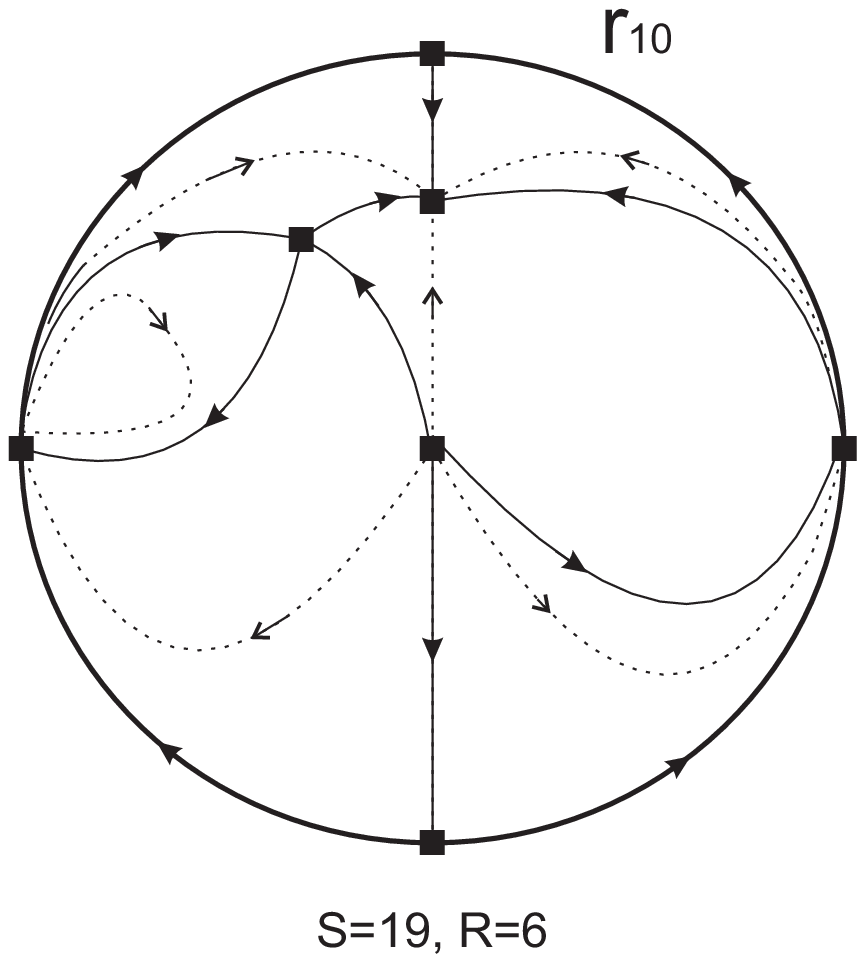,width=4.5cm,height=4.5cm} \\
	
\epsfig{file=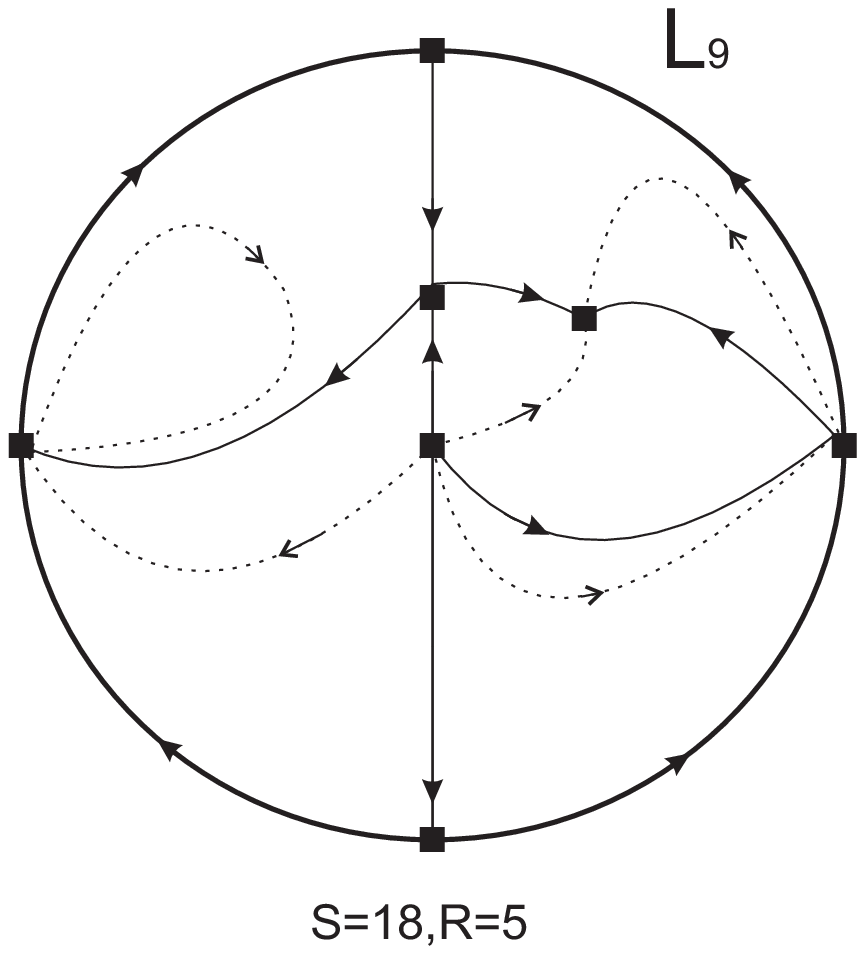,width=4.5cm,height=4.5cm} &
\epsfig{file=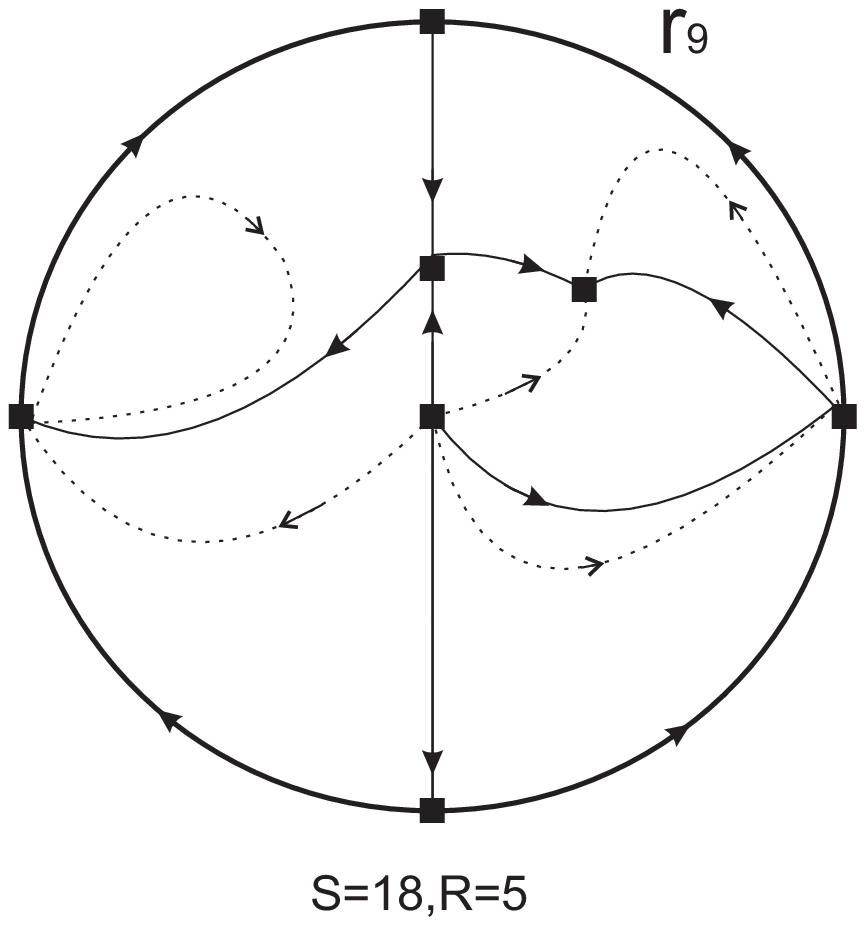,width=4.5cm,height=4.5cm} &
\epsfig{file=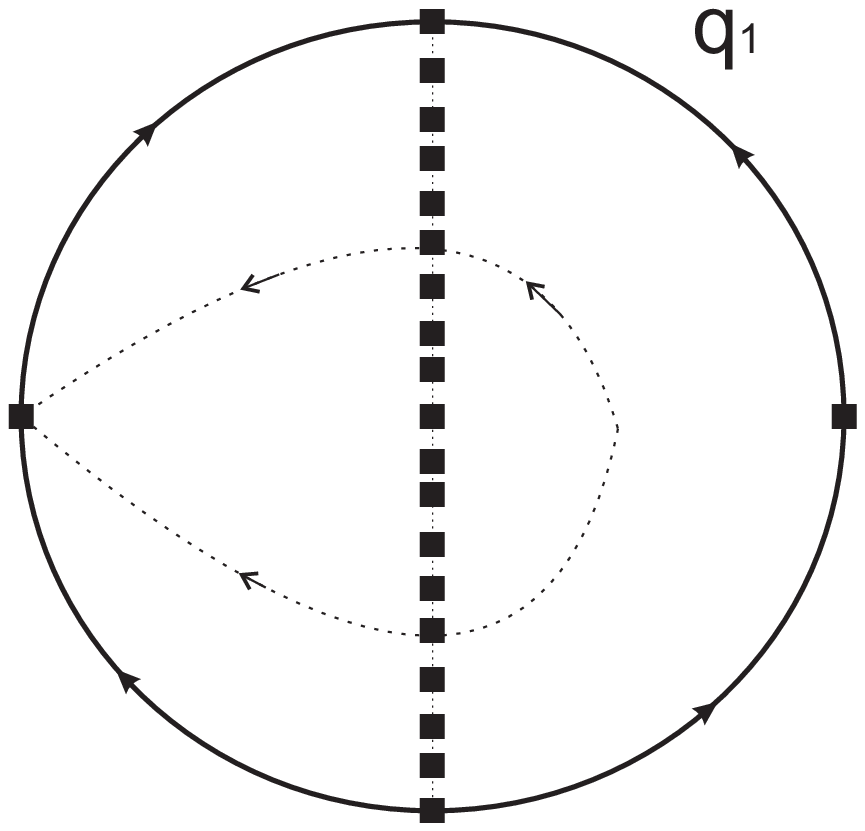,width=4.5cm,height=4.5cm}
\end{array}
$$
\caption{Global phase portraits of system \eqref{e2}.}
\label{f4}
\end{figure}

\begin{theorem}\label{t1}
The following statements hold.
\begin{itemize}
\item[(a)] For $c=2b+1$ system \eqref{e2} has the Darboux first integral
\begin{equation}\label{Darboux-first}
H(x,y)=x^{2b}\left(\left( 2\,b+1 \right) {y}^{2}-2\left( 2\,b+1 \right) y+2\,x\right).
\end{equation}
		
\item[(b)] For $c=1/2$ and  $b=-1/4$ (note that $c=2b+1$) system \eqref{e2} has the previous Darboux first integral and the Darboux invariants
\begin{equation}\label{I12}
I_1=x^{-1/2}(y^2+4x)\exp({-t/2}), \qquad I_2=x^{-1/2}[(y-2)^2+4x]\exp({t/2}).
\end{equation}
		
\item[(c)] For $b=(1-c)/(2c-3)$ system \eqref{e2} has the Darboux invariant
\begin{equation}\label{Dinv2}
I_3=x^{2(1-c)/(2c-3)}\left[\left(y-(3-2c)\right)^2+2(3-2c)x\right]\exp\left(\frac{2(1-c)}{3-2c}t\right).
\end{equation}
\end{itemize}
\end{theorem}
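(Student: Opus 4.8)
The plan is to apply Theorem~\ref{t0} once we have located, in each of the three parameter regimes, enough invariant algebraic curves of system~\eqref{e2} together with their cofactors. The system always has the invariant straight line $f_1=x=0$, and from $P\,\partial_x f_1+Q\,\partial_y f_1=x(1-y)$ we read off its cofactor $K_1=1-y$. Every other curve we need is the irreducible conic sitting inside the parentheses of \eqref{Darboux-first}, \eqref{I12} or \eqref{Dinv2}: for each such conic $g$ we look for a cofactor $K_g$ of degree at most one, say $K_g=\alpha+\beta x+\gamma y$, and determine $\alpha,\beta,\gamma$ (they turn out to satisfy $\beta=0$ in all cases) by comparing coefficients on the two sides of the defining identity \eqref{e3}. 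This coefficient comparison is the only computational content of the argument, and it is routine; the single point requiring attention is to keep the substitutions $c=2b+1$, $(c,b)=(1/2,-1/4)$, $b=(1-c)/(2c-3)$ straight, so that the cofactors come out exactly as claimed. Once each pair (curve, cofactor) is in hand, parts (a)--(c) follow at once from Theorem~\ref{t0}.

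For (a), put $c=2b+1$, so $1-c=-2b$ and $Q=by^2-2by+x$. With $f_2=(2b+1)y^2-2(2b+1)y+2x$, substitution into \eqref{e3} gives that $f_2=0$ is invariant with cofactor $K_2=2b(y-1)$ (here $2b+1=c>0$, so $f_2$ is a genuine parabola, hence irreducible). Since $2b\,K_1+K_2=2b(1-y)+2b(y-1)=0$, Theorem~\ref{t0}(a) with $\lambda_1=2b$, $\lambda_2=1$ shows that $f_1^{2b}f_2=x^{2b}\big((2b+1)y^2-2(2b+1)y+2x\big)$ is a first integral, which is precisely \eqref{Darboux-first}.

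For (b), $c=1/2$ and $b=-1/4$ satisfy $2b+1=1/2=c$, so part (a) applies verbatim and produces the stated first integral. For the two invariants take $g_1=y^2+4x$ and $g_2=(y-2)^2+4x$; substituting into \eqref{e3} with $P=x(1-y)$ and $Q=-\tfrac14 y^2+\tfrac12 y+x$ yields that $g_1=0$ and $g_2=0$ are invariant with cofactors $L_1=1-\tfrac12 y$ and $L_2=-\tfrac12 y$. Then $-\tfrac12 K_1+L_1=\tfrac12$ and $-\tfrac12 K_1+L_2=-\tfrac12$, so Theorem~\ref{t0}(b) gives that $f_1^{-1/2}g_1\,e^{-t/2}=x^{-1/2}(y^2+4x)e^{-t/2}$ (with $s=-1/2$) and $f_1^{-1/2}g_2\,e^{t/2}=x^{-1/2}\big((y-2)^2+4x\big)e^{t/2}$ (with $s=1/2$) are invariants; these are the two functions in \eqref{I12}.

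Finally, for (c) set $d=3-2c$ (note $d\neq0$, since $b$ is assumed defined), so the hypothesis $b=(1-c)/(2c-3)$ becomes $bd=c-1$, equivalently $2bd=1-d$. Take $g_3=(y-d)^2+2dx$. Substituting into \eqref{e3} and using these identities, the constant term of the right-hand side must vanish (the left-hand side has none), which forces the free term of the cofactor to be zero, and matching the remaining coefficients gives that $g_3=0$ is invariant with cofactor $L_3=2by$. Since $2b\,K_1+L_3=2b(1-y)+2by=2b$, Theorem~\ref{t0}(b) with $\lambda_1=2b$ on $f_1$, coefficient $1$ on $g_3$, and $s=-2b=2(1-c)/(3-2c)$, yields the invariant $x^{2b}\big((y-d)^2+2dx\big)e^{-2bt}$, which upon reinserting $b=(1-c)/(2c-3)$ and $d=3-2c$ is exactly \eqref{Dinv2}. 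The conceptual heart of the proof is thus almost nothing beyond Theorem~\ref{t0}; the real work --- guessing the three conics and their cofactors --- has effectively been done for us by the statement, and what remains is the bookkeeping of the parameter substitutions in \eqref{e3}.
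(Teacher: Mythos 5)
Your proposal is correct and follows essentially the same route as the paper: the invariant line $x=0$ with cofactor $1-y$, the same three conics (your $f_2$ is just twice the paper's, and your $d=3-2c$ reparametrization in (c) reproduces the paper's $f_5$ and cofactor $2(c-1)y/(3-2c)=2by$), and the same application of Theorem~\ref{t0}(a) and (b) with identical cofactor combinations. No gaps.
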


Theorem \ref{t1} is proved in Section \ref{s2}.

In this paper all the phase portraits are drawn in the so called Poincar\'e disc, which roughly speaking, is the closed disc centered at the origin of coordinates and of radius one. The interior of this disc is identified with $\R^2$. Its boundary, the circle $\mathbb S^1$, is identified with the infinity of $\R^2$. In the plane $\R^2$ we can go to infinity in as many directions as points in the circle $\mathbb S^1$. For more details about the Poincar\'e disc and the coordinates for studying the polynomial differential system \eqref{e2} in it, see Appendix 1.

\begin{theorem}\label{t2}
The Basener--Ross differential system \eqref{e2} has $15$ non-topological equivalent phase portraits in the Poincar\'e disc $($see Figures $\ref{f2}$, $\ref{f3}$ and $\ref{f4}$ and the proof of this theorem$)$.
\end{theorem}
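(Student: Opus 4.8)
The plan is to determine the phase portrait of system \eqref{e2} on the Poincar\'e disc by combining the local analysis of all finite and infinite singular points with the Markus--Neumann--Peixoto theorem, which says that two polynomial vector fields on the Poincar\'e disc are topologically equivalent if and only if they have the same separatrix configuration. Thus the work splits into four tasks: (i) classify the finite singularities; (ii) classify the singularities at infinity via the Poincar\'e compactification; (iii) rule out or locate limit cycles; (iv) assemble the separatrix skeleton for each region of the parameter set $\{b>-1,\ c>0\}$ and check that exactly $15$ inequivalent portraits occur.

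For the finite singularities, observe that $x=0$ is an invariant line, so on it the singular points are $(0,0)$ and, when $b\neq 0$, the point $(0,(c-1)/b)$; and the set $Q=0$ meets $y=1$ at the single point $(c-1-b,1)$. I would write down the Jacobian at each of these, dispatch the hyperbolic cases by Hartman--Grobman, and handle the semi-hyperbolic and nilpotent (or linearly zero) cases --- which appear on the codimension-one loci $c=1$, $c=2b+1$, $b=0$, and a few related curves --- using the normal-form results for elementary and nilpotent points (Chapter 2 of \cite{DLA}) and, where necessary, directional blow-ups. The degenerate parameter values where two finite singularities collide (for instance on $c=2b+1$, where the Darboux first integral \eqref{Darboux-first} of Theorem \ref{t1}(a) is available) or where a finite singularity escapes to infinity are precisely the bifurcation values that must be isolated.

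For the infinite singularities I would pass to the charts $U_1,V_1,U_2,V_2$ of the Poincar\'e compactification set up in Appendix 1; the degree-two homogeneous parts, $-xy$ for $\dot x$ and $by^2$ for $\dot y$, control the leading behaviour on the equator, so the infinite equilibria come from the origin of $U_2$ together with the points determined by the highest-order terms, and these are again classified by linearization and, in the non-elementary cases, by blow-up. Because the top-order part is simple, I expect only a small number of infinite singular points and a manageable set of subcases depending on $\mathrm{sign}(b)$ and on whether $b=0$. The Poincar\'e--Hopf index formula gives a useful consistency check on the list of singularities.

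The main obstacle is the global step: proving the nonexistence of limit cycles (or locating the ones that exist) and correctly gluing the local pictures into global separatrix configurations. On the integrable stratum $c=2b+1$ the first integral \eqref{Darboux-first} settles everything; on the loci of Theorem \ref{t1}(b),(c) the Darboux invariants \eqref{I12} and \eqref{Dinv2} constrain the $\alpha$- and $\omega$-limit sets, forcing orbits toward infinity or toward the invariant line $x=0$ and thereby excluding periodic orbits there. Off these loci I would use a Bendixson--Dulac type argument together with the position and index of the unique antisaddle, and the classical theory of limit cycles of quadratic systems, to preclude periodic orbits. Finally I would decompose the region $\{b>-1,\ c>0\}$ into the finitely many cells cut out by the bifurcation curves found above, determine the portrait on each open cell and on each boundary stratum, verify compatibility across adjacent strata (so that each boundary picture is a genuine limit of its neighbours), and count the topologically distinct outcomes, obtaining the $15$ phase portraits displayed in Figures \ref{f2}, \ref{f3} and \ref{f4}.
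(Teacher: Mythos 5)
Your overall route coincides with the paper's: classify the finite singular points $P_0,P_1,P_2$ along the bifurcation curves in the $(b,c)$-plane (using Hartman--Grobman, semi-hyperbolic normal forms, and the first integral \eqref{Darboux-first} to decide that $P_2$ is a center on $c=2b+1$), classify the infinite points in the charts of the Poincar\'e compactification (the origin of $U_1$ is nilpotent and gives one hyperbolic plus one elliptic sector, the origin of $U_2$ has eigenvalues $-b-1,-b$), exclude limit cycles, and then assemble and compare separatrix configurations over the cells of $\{b>-1,\,c>0\}$ via the Markus--Neumann--Peixoto theorem to arrive at the $15$ classes. So the skeleton of your plan is the same as the paper's.

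The genuine gap is the limit-cycle step in the open parameter regions where $P_2$ is a hyperbolic focus (i.e. $D_1<0$, $g_3\neq 0$). There the integrable stratum $c=2b+1$ and the Darboux-invariant loci of Theorem \ref{t1}(b),(c) are of no help, since they are measure-zero sets of parameters, and what you propose off them --- ``a Bendixson--Dulac type argument together with the position and index of the unique antisaddle, and the classical theory of limit cycles of quadratic systems'' --- is not an argument yet: no Dulac function is exhibited, and producing one valid throughout these regions is not routine for a quadratic system that actually has a focus. The paper closes this step with a specific combination of known results: (i) a limit cycle of a quadratic system must surround a focus (Coppel); (ii) a quadratic system with an invariant straight line, here $x=0$, has at most one limit cycle, and it must be stable or unstable (Ryckov, Coll--Llibre, Coppel); (iii) the separatrix skeleton already determined around the focus forces any limit cycle to be semistable, contradicting (ii). Unless you either reproduce this argument or genuinely construct a Dulac function, the separatrix configurations --- and hence the counts of separatrices and canonical regions on which the identification of the $15$ topological classes rests --- remain unjustified exactly in the regions containing a focus.
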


Theorem \ref{t2} is proved in Section \ref{s3}.

\section{Darboux integrability}\label{s2}

In this section we prove Theorem \ref{t1}.

We denote by
\begin{equation}\label{vf}
\X=x(1-y){\frac{\partial }{\partial x}}+(by^2+(1-c)y+x){\frac{\partial }{\partial y}}
\end{equation}
the vector field defined by system \eqref{e2} .

The straight line $f_1=x=0$ is an invariant algebraic curve of the vector field \eqref{vf} with cofactor $K_1=1-y$.

First consider $c=2b+1\neq 0$. Then system \eqref{e2}   admits  the invariant algebraic curve
$$
f_2= \frac{2b+1}{2} y^2- (2b+1)y+x=0,
$$
with cofactor $K_2=2b(y-1)$. Note that $2bK_1+K_2=0$, so from statement (a) of Theorem \ref{t0} we have that for $c=2b+1$ system \eqref{e2} admits the Darboux first integral $H=f_1^{2b} f_2$ given in \eqref{Darboux-first}. Therefore statement (a) of Theorem \ref{t1} is proved.

Now we consider the special choice of the parameters $c=1/2$ and $b=-1/4$. Note that still relation $c=2b+1$ holds, and consequently system \eqref{e2} has the Darboux first integral \eqref{Darboux-first} for these values of the parameters. Furthermore, $f_3=y^2+4x=0$ and $f_4=(y-2)^2+4x=0$ are invariant algebraic curves of system \eqref{e2} with cofactors $K_3=(2-y)/2$ and $K_4=-y/2$, respectively. Since $$
-\frac12 K_1+K_3= \frac12 \quad  \mbox{and} \quad -\frac12 K_1+K_4= -\frac12,
$$
from statement (b) of Theorem \ref{t0} it follows that $I_1$ and $I_2$ are Darboux invariants of system \eqref{e2}. This completes the proof of statement (b) of Theorem \ref{t1}.

Finally we consider system \eqref{e2} with $b=(1-c)/(2c-3)$. Then $f_5= 2 (3 - 2 c) x + (-3 + 2 c + y)^2=0$ is an invariant algebraic curve of this system with cofactor $K_5= 2(c-1)y/(3-2 c)$. Since
$$
\frac{2(1-c)}{2c-3} K_1 + K_5= \frac{2(c-1)}{3-2 c}.
$$
Again from statement (b) of Theorem \ref{t0} we get that $I_3$ is a Darboux invariant of system \eqref{e2}. So statement (c) of Theorem \ref{t1} is proved.

\section{The global phase portraits in the Poincar\'e disc of system  \eqref{e2}}\label{s3}

In order to present the global phase portraits of the two-parametric family  \eqref{e2} we first study its finite singular points, in Subsection \ref{sfinite} and after the infinite singular points in Subsection \ref{sinfinite}.

\subsection{The Finite singular points of system \eqref{e2}}\label{sfinite}

System \eqref{e2} has the following finite singular points whenever they are defined (i.e. if $b\ne 0$)
\begin{equation}\label{sing-points}
P_0=(0,0), \quad P_1=(0,(c-1)/b), \quad P_2=(c-b-1,1).
\end{equation}

The origin  $P_0$ has eigenvalues $1$ and $1-c$. So $P_0$ is a hyperbolic unstable node for $c<1$ and a hyperbolic saddle for $c>1$. For $c=1$ the origin $P_0$ collapses to the point $P_1$ and becomes a semi--hyperbolic saddle node, see Theorem 2.19 of \cite{DLA}. Hence we consider the bifurcation curve $g_1=c-1=0.$

The point $P_1$ is defined for $b\neq 0$ and has eigenvalues $c-1$ and $(b-c+1)/b$. We consider the bifurcation curves $g_0=b=0,$ and $g_2=b-c+1=0.$ Note that for our study we always have $g_0>-1$. Then for $b>0$, $g_1>0$ and $g_2>0$ the point $P_1$ is a hyperbolic unstable node, and for $b>0$, $g_1<0$ and $g_2>0$ it is a hyperbolic saddle. Then for $b<0$, $g_1>0$ and $g_2>0$ it is a hyperbolic saddle, and for $b<0$, $g_1<0$ and $g_2>0$ it is a hyperbolic stable node. Then for $b>0$, $g_1>0$ and $g_2<0$ it is a hyperbolic saddle, and for $b>0$, $g_1<0$ and $g_2<0$ it is a hyperbolic stable node. Then for $b<0$, $g_1>0$ and $g_2<0$ it is a hyperbolic unstable node, and for $b<0$, $g_1<0$ and $g_2<0$ it is a hyperbolic saddle. For $c=b+1$ we have that $g_2=0$ and the point $P_1$ collapses to the point $P_2$ and it becomes a semi--hyperbolic saddle--node, see again Theorem 2.19 of \cite{DLA}.

The point $P_2$ has the eigenvalues
$$
\lambda_{\pm}=\dfrac{2b-c+1}{2}\pm\frac{\sqrt{D_1}}{2},
$$
with $D_1={c}^{2}-4\,cb+4\,{b}^{2}-6\,c+8\,b+5=0$ a parabola in the $(b,c)$-plane and it is a bifurcation curve. Note that $\lambda_- \lambda_+= c-b-1=-g_2.$ First consider $D_1\geq 0$.   Then for $g_2<0$ the point $P_2$ is a hyperbolic node, whereas for $g_2>0$ is a hyperbolic saddle. For $g_2=0$ we recall that the point $P_2$ collapses to the point $P_1$ and it is a semi--hyperbolic saddle node.

Now we consider $D_1<0.$ Additionally we must consider the curve $g_3=2b-c+1=0$. For $g_3>0$ the point $P_2$ is a hyperbolic unstable focus, whereas for $g_3<0$ we have that $P_2$ is a hyperbolic stable focus. For $g_3=0$  the point $P_2$ could be a focus or a  center. Since for $c=2b+1\neq 0$ there is the first integral \eqref{Darboux-first} defined at $P_2$, it follows that $P_2$ is a center.

\subsection{The infinite singular points of system \eqref{e2}}\label{sinfinite}

In order to describe the global phase portraits of system \eqref{e2} in the Poincar\'e disc we must study the infinite singular points, see Appendix 1.

System \eqref{e2} in the chart $(U_1,F_1)$ is written
\begin{equation}\label{uf1}
\begin{array}{ccl}
\dot{z_1}&=&{z_2}+(b+1)z_1^{2}-c{z_1}\,{z_2},\\
\dot{z_2}&=&{z_2}\, \left(z_1 -{z_2}\right),
\end{array}
\end{equation}
and the origin $(0,0)$ of $(U_1, F_1)$ is a singular point. Note that the linear part of system \eqref{uf1} at the origin is not identically zero but its two eigenvalues are equal to zero.  So the origin is a nilpotent singular point. Applying Theorem 3.5 of \cite{DLA} we have that for $b>-1$ the origin $(0,0)$ of the chart $(U_1,F_1)$ is the union of one hyperbolic and one elliptic sector.

System \eqref{e2} in the chart $(U_2,F_2)$ is
\begin{equation}\label{uf2}
\begin{array}{ccl}
\dot{z_1}&=&{z_1}\, \left( c{z_2}-{z_1}\,{z_2}-b-1 \right), \\
\dot{z_2}&=&{z_2}\, \left( c{z_2}-{z_1}\,{z_2}-{z_2}-b \right),
\end{array}
\end{equation}
and the origin is the only singular point with eigenvalues $-b-1$, $-b.$  We consider the bifurcation curve $g_4=b+1=0.$  Since $b>-1$ we have that the origin of the chart $(U_2,F_2)$ is a hyperbolic saddle for $-1<b<0,$ and  for $b>0$ is a hyperbolic stable node. For $b=0$ it is a semi--hyperbolic saddle--node.

\begin{center}
\begin{figure}
\epsfig{file=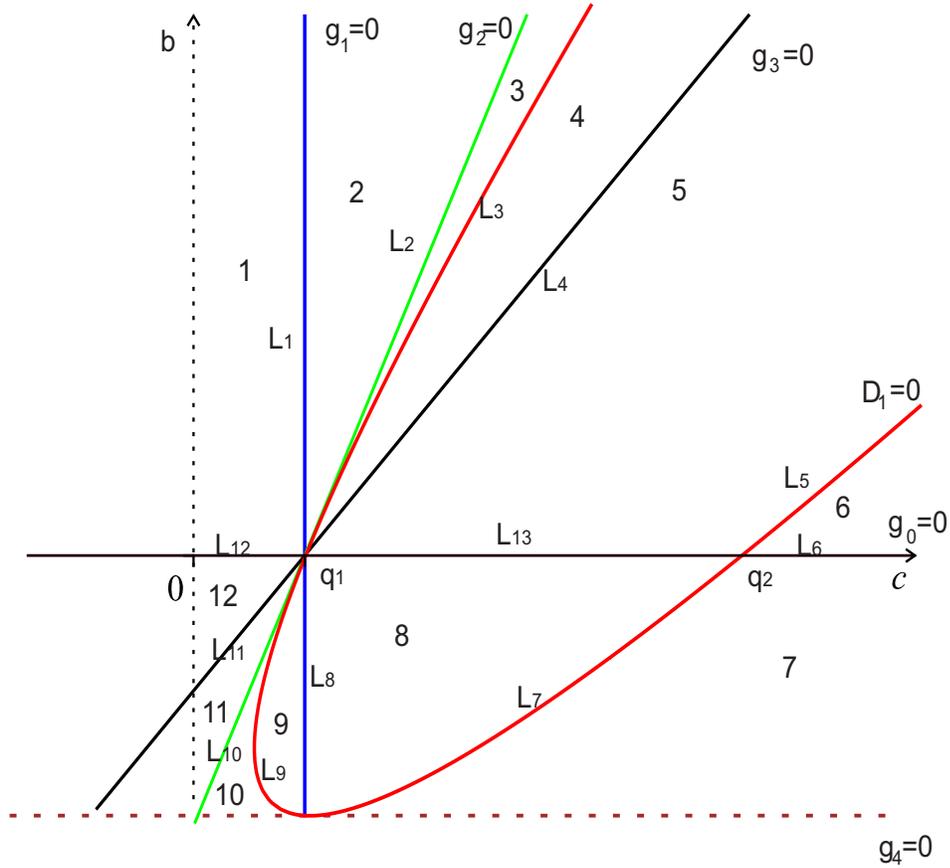,width=12.5cm,height=11.5cm}
\caption{The bifurcation diagram of system \eqref{e2} with  $c>0$ and $b>-1$.}
\label{figurediagrama}
\end{figure}
\end{center}

\subsection{The topological classification of the global phase portraits of system \eqref{e2}}

From Subsections \ref{sfinite} and \ref{sinfinite} we obtain the bifurcation diagram of system \eqref{e2} given in Figure \ref{figurediagrama}.

In the bifurcation diagram of Figure \ref{figurediagrama} we have twelve regions $r_1,\ldots,r_{12},$ the thirteen lines $L_1,\ldots,L_{13}$, and two points $q_1=(1,0)$ and $q_2=(5,0)$.

Using that the straight line $x=0$ of system \eqref{e2} is invariant, how is the flow of system \eqref{e2} on $y=0$, and the local phase portraits of the finite and infinite singular points of system \eqref{e2} we obtain the phase portraits of system \eqref{e2} described in the Figures \ref{f2}, \ref{f3} and \ref{f4} according with the twelve regions, the thirteen lines and the two points. In these figures we denote by $S$ the number of separatrices and by $R$ the number of the canonical regions, see for more details Appendix 2.

In Figures \ref{f2}, \ref{f3} and \ref{f4} are claimed that the phase portraits of system \eqref{e2} has no limit cycles. Now we prove the claim. First it is known that if a quadratic polynomial differential system has a limit cycle this limit cycle must surround a focus, see Theorem 6 of \cite{coppel}. Also it is known that if a quadratic polynomial differential system has an invariant straight line it has at most one limit cycle, which must be stable or unstable. The first proof of this result is due to Ryckov \cite{Ry}, a more clear proofs appear later on in \cite{CL, C2}. From the phase portraits of Figures \ref{f2}, \ref{f3} and \ref{f4} it follows that if there is a focus surrounded by a limit cycle, this limit cycle must be semistable, but since system \eqref{e2} has the invariant straight line $x=0$ if it has a limit cycle this must be unique and either stable or unstable, hence system \eqref{e2} cannot have limit cycles. The claim is proved.

\begin{proof}[Proof of Theorem $\ref{t2}$]
We recall that two global phase portraits are topological equivalent if and only if does exist a homeomorphism to bring the separatrix configuration of one phase portrait into the separatrix configuration of the other, see Theorem \ref{mnp} in Appendix 2.

We distinguish the following $15$ topologically different phase portraits in the Poincar\'e disc using the mentioned Theorem \ref{mnp}.

Case $S=14, R=3$. We have that the phase portraits of the lines $L_6$ and $L_{13}$, and of the point $q_2$ are topological equivalents, and we simply write $L_6=L_{13}=q_2$.

Case $S=15, R=4$. We obtain the unique phase portrait $L_{12}$.

Case $S=16, R=3$. There is only the phase portrait $r_7$.

Case $S=16, R=5$. We have three different topological phase portraits $L_2$, $L_{8}$ and $L_{10}$.

Case $S=17, R=4$. We obtain a unique phase portrait $r_{12}=L_{11}=r_{11}$.

Case $S=17, R=6$. There is only the phase portrait $L_1$.

Case $S=18, R=5$. We have four different topological phase portraits $r_3=r_4=L_3$, $r_5=r_6=L_{5}$, $r_8=r_9=L_9=L_7$ and $L_4$.

Case $S=19, R=6$. We obtain two different topological phase portraits $r_1=r_2$ and $r_{10}.$

Case $q_1$.

This completes the proof of the theorem.
\end{proof}

\section{Appendix 1: Poincar\'e compactification}

We consider the quadratic polynomial differential system \eqref{e2} and its corresponding vector field $\X=(P,Q)$. We want to obtain the global phase portrait of system \eqref{e2}, and consequently we need to control the orbits that come from or escape to infinity. For this reason we use the so called {\it Poincar\'e compactification}, see Chapter 5 of \cite{DLA}.

Let $\R^2$ be the plane in $\R^3$ defined by $(y_1, y_2, y_3) = (x_1,x_2, 1)$. Consider the {\it Poincar\'e sphere} $\aS^2 = \{y=(y_1, y_2, y_3) \in \R^3 : y_1^2+ y_2^2 + y_3^2 = 1\}$  and we denote by $T_{(0,0,1)}\aS^2$ the tangent space to $\aS^2$ at the  point $(0,0,1)$ (see also \cite{point}). We consider the central projection $f: T_{(0,0,1)}:\R^2\rightarrow \aS^2$. Note that $f$ defines two copies of $\X$, one in the northern hemisphere $\{y\in \aS^2 : y_3 > 0\}$ and the other in the southern hemisphere. Now set $\hat{\X}=Df\circ \X$. We observe that $\hat{\X}$ is defined on $\aS^2$ except on its equator $\aS^1$. Hence, the points at infinity of $\R^2$ are in bijective correspondence with $\aS^1 = \{y \in \aS^2 : y_3 = 0\}$, (the equator of $\aS^2$). So according to this construction $\aS^1$ is identified to be the infinity of $\R^2$. The resulting {\it Poincar\'e compactified vector field} $p(\X)$ of $\X$ will be an analytic vector field induced on $\aS^2$ as follows:

First  we multiply $\hat{\X}$ by the factor $y_3^{2}$, and so the vector field $y_3^{2}\hat{\X}$ is defined in the whole $\aS^2$. Additionally, on $\aS^2\setminus \aS^1$ there are two symmetric copies of $\X$ and note that the behavior of $p(\X)$ around $\aS^1$ gives the behavior of $\X$ near the infinity. Then the {\it Poincar\'e disc} $\D^2$ is the projection of the closed northern hemisphere of $\aS^2$ on $y_3= 0$ under $(y_1, y_2, y_3) \longmapsto (y_1,y_2)$.

Since $\aS^2$ is a differentiable manifold, we can consider the six local charts $U_i = \{y \in \aS^2 : y_i > 0\}$, and $V_i = \{y \in \aS^2 : y_i < 0\}$ for $i = 1,2,3$ with the diffeomorphisms $F_i: U_i\longrightarrow \R^2$ and $G_i: V_i\longrightarrow \R^2,$ which are the inverses of the central projections from the planes tangent at the points $(1, 0, 0), (- 1, 0, 0), (0, 1, 0), (0, - 1, 0), (0, 0, 1)$ and $(0, 0, - 1)$ respectively. We set $z = (z_1, z_2)$ to be the value of $F_i(y)$ or $G_i(y)$ for any $i = 1,2,3.$  Hence, the expressions of the compactified vector field
$p(\X)$ of $\X$  are
\begin{equation*}
\begin{aligned}
z^{2}_{2}\Delta(z)\Bigg( Q\Big(\frac{1}{z_2},\frac{z_1}{z_2}\Big)-z_1P\Big(\frac{1}{z_2},\frac{z_1}{z_2}\Big),\,-z_2P\Big(\frac{1}{z_2}, \frac{z_1}{z_2}\Big) \Bigg) \; \qquad \mbox{in}   \qquad U_1,\\
z^{2}_{2}\Delta(z)\Bigg( P\Big(\frac{z_1}{z_2},\frac{1}{z_2}\Big)- z_1Q\Big(\frac{z_1}{z_2},\frac{1}{z_2}\Big),\,-z_2Q\Big(\frac{z_1}{z_2}, \frac{1}{z_2}\Big) \Bigg) \;  \qquad \mbox{in} \qquad U_2,\\
\Delta(z)\big(P(z_1,z_2),Q(z_1,z_2)\big) \; \qquad \mbox{in} \qquad U_3,
\end{aligned}
\end{equation*}
where $\Delta(z) = (z_1^2 + z_2^2 + 1)^{- \frac{1}{2}}$. The expressions of the vector field $p(\X)$ in the local chart $V_i$ is the same as in the chart $U_i$ multiplied by the  factor $-1$. In these coordinates $z_2 = 0$ denotes the points of $\aS^1$. Usually, we omit the factor $\Delta(z)$ by rescaling the vector field $p(\X),$ and so we obtain a polynomial vector field in each local chart. Also note that the infinity $\aS^1$ is invariant with respect to $p(\X)$.

Two polynomial vector fields $\X$ and $\Y$ on $\R^2$ are {\it topologically equivalent} if there exists a homeomorphism on $\aS^2$ preserving the infinity $\aS^1$ carrying orbits of the flow induced by $p(\X)$ into orbits of the flow induced by $p(\Y)$. Note that the homeomorphism should preserve or reverse simultaneously the sense of all orbits of the two compactified vector fields $p(\X)$ and $p(\Y)$.

\section{Appendix 2: Separatrix configuration}

In order to proceed with the topological classification of the global phase portraits of system \eqref{e2} we need to consider the definition of parallel flows. We use the definition given by Markus \cite{markus} and Neumann in \cite{Neumann}. Let $\phi$ be a  ${\mathcal C}^k$ local flow on the two dimensional manifold $\R^2$ or $\R^2\setminus\{0\}$. Here $k$ is either a positive integer, or $\infty$ (smooth), or $\omega$ (analytic). The flow $(M, \phi)$ is ${\mathcal C}^k$ {\it parallel} if it is ${\mathcal C}^k$-equivalent to one of the following ones:
\begin{itemize}
\item[\it {strip}:] $(\R^2, \phi)$ with the flow $\phi$ defined by $\dot{x} = 1, \dot{y} = 0$;

\item[\it annular:] $(\R^2\setminus \{0\}, \phi)$ with the flow $\phi$ defined (in polar coordinates) by $\dot{r} = 0, \dot{\theta} =1$;

\item[\it spiral:] $(\R^2\setminus \{0\}, \phi)$  with the flow $\phi$ defined by $\dot{r} = r, \dot{\theta} = 1$.
\end{itemize}

The {\it separatrices} of the vector field $p(\X)$ in the Poincar\'e disc $D$ are
\begin{itemize}
\item[(i)] all the orbits of $p(\X)$ which are in the boundary $\aS^1$ of the Poincar\'e disc (recall that $\aS^1$ is the infinity of $\R^2$);

\item[(ii)] all the finite singular points of $p(\X)$;

\item[(iii)] all the limit cycles of $p(\X)$; and

\item[(iv)] all the separatrices of the hyperbolic sectors of the finite and infinite singular points of $p(\X)$.
\end{itemize}

We denote by $\Sigma$ the union of all separatrices of the flow $(\D,\phi)$ defined by the compactified vector field $p(\X)$ in the Poincar\'e disc $\D$. Note that $\Sigma$ is a closed invariant subset of $\D$. Every open connected component of $\D\setminus \Sigma$, with the restricted flow, is called a {\it canonical region} of $\phi$.

For a proof of the following result see \cite{LiNic, Neumann}.

\begin{theorem}\label{basic}
Let $\phi$ be a ${\mathcal C}^k$ flow in the Poincar\'e disc with finitely many separatrices, and let $\Sigma$ be the union of all its separatrices. Then the flow restricted to every canonical region is ${\mathcal C}^k$ parallel.
\end{theorem}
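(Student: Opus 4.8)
The plan is to prove the statement one canonical region at a time. Fix a canonical region $R$, i.e.\ a connected component of $\D\setminus\Sigma$ with the restricted flow, and record first what the definition of $\Sigma$ gives us: $R$ is open, connected and invariant, and it contains no singular point of $p(\X)$, no limit cycle, no orbit of $\aS^1$, and no separatrix of a hyperbolic sector. Since $\aS^1\subset\Sigma$ always, $R$ is moreover a non-compact open subset of $\D\setminus\aS^1\cong\R^2$. I would then show that the flow on $R$ is $C^k$-equivalent to one of the three normal forms \emph{strip}, \emph{annular}, \emph{spiral}; doing this for every canonical region proves the theorem.

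First I would set up the orbit space. Because $p(\X)$ has no singular point in $R$, the flow box theorem produces around each point of $R$ a $C^k$ chart conjugating $\phi$ to the translation flow $\dot x=1,\ \dot y=0$, so $N:=R/\phi$ is locally Euclidean of dimension one, second countable and connected. The crucial step — and the one I expect to be the main obstacle — is to show that $N$ is \emph{Hausdorff}, equivalently that $R$ admits a global $C^k$ cross-section met by every orbit. The point is that two orbits of $R$ can be non-separated in $N$ only if some point has every neighbourhood meeting both of them, and a Poincar\'e--Bendixson analysis on the Poincar\'e disc shows that such non-separated pairs can arise only along a configuration that, by the definition of separatrix used here (that of Markus and Neumann), already belongs to $\Sigma$ — a singular point, a limit cycle, a hyperbolic-sector separatrix, or an orbit of $\aS^1$. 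Since $R\cap\Sigma=\emptyset$ this is impossible, so $N$ is a connected $1$-manifold without boundary, hence $N\cong\R$ or $N\cong\aS^1$.

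Finally I would classify. A preliminary observation: if $R$ contains one periodic orbit then every orbit of $R$ is periodic. Indeed such an orbit is not a limit cycle (limit cycles lie in $\Sigma$), so it is flanked on both sides by periodic orbits; thus the set of periodic points of $\phi$ in $R$ is open, and it is also closed in $R$ (a one-sidedly accumulated periodic orbit would be a limit cycle, and periods stay bounded on compact subsets, so a limit inside $R$ of periodic orbits is again periodic), whence by connectedness it is all of $R$. Now split on $N$. If $N\cong\aS^1$, no orbit of $R$ is periodic — otherwise all would be, making $R$ a closed surface and contradicting that $R$ is a non-compact subset of $\R^2$ — so $\phi$ acts freely, $R\cong\aS^1\times\R$, and the monodromy of the bundle $R\to N$, carried by a returning (spiralling) orbit, makes $(R,\phi)$ $C^k$-equivalent to the spiral model $\dot r=r,\ \dot\theta=1$. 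If $N\cong\R$ and $R$ has a periodic orbit, then all orbits of $R$ are periodic and cutting $R$ along a transversal arc yields a $C^k$ equivalence with the annular model $\dot r=0,\ \dot\theta=1$. If $N\cong\R$ and $R$ has no periodic orbit, $\phi$ defines a free $\R$-action over the contractible base $\R$, so $R\cong\R^2$ and $(R,\phi)$ is $C^k$-equivalent to the strip model $\dot x=1,\ \dot y=0$. These cases are exhaustive and give the three parallel forms, which completes the proof; this is the classical theorem of Markus and Neumann, and full details can be found in \cite{LiNic, Neumann}.
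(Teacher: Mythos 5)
First, a point of comparison: the paper does not prove Theorem \ref{basic} at all --- it is stated as a known background result with the pointer ``for a proof of the following result see \cite{LiNic, Neumann}.'' So there is no in-paper argument to measure your proposal against; what you have written is an outline of the standard Markus--Neumann orbit-space proof contained in those references (flow-box charts make the orbit space $N=R/\phi$ a $1$--manifold, Hausdorffness is the crux, then $N\cong\R$ or $N\cong\aS^1$ together with the periodic/non-periodic dichotomy yields the strip, annular and spiral models), and your last sentence defers the details to the very same sources the paper cites. The classification half of your sketch is essentially fine: once $N$ is a Hausdorff connected $1$--manifold and one knows that a single periodic orbit in $R$ forces all orbits of $R$ to be periodic, the three cases you list are exhaustive and give the three parallel forms.

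As a standalone proof, however, there is a genuine gap exactly where you yourself locate the main obstacle. The claim that two orbits of $R$ which are non-separated in $N$ can only occur along a configuration already belonging to $\Sigma$ is not a routine consequence of Poincar\'e--Bendixson; it \emph{is} the substantive content of the theorem, and ``a Poincar\'e--Bendixson analysis shows'' is an assertion, not an argument. It is also the only place where the hypotheses actually enter: the finiteness of the set of separatrices (already needed for $\Sigma$ to be closed, hence for $R$ to be open), and the specific definition of separatrix used here, via hyperbolic sectors of the finitely many finite and infinite singular points of $p(\X)$. To close the gap you would have to show that a non-separated pair in $N$ produces boundary orbits of $R$ that are separatrices in the sense of items (i)--(iv): such boundary orbits are either contained in $\aS^1$, or periodic (and then limit cycles), or their $\alpha$-- and $\omega$--limit sets contain singular points, in which case one must invoke the finite sectorial decomposition of those singular points to exhibit a hyperbolic-sector separatrix lying between the two non-separated orbits. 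That local analysis is available in the present setting precisely because the singular points of the compactified polynomial vector field admit finite sectorial decompositions; for general ${\mathcal C}^k$ flows, statements of Markus--Neumann type with this notion of separatrix are known to be delicate, which is why this step cannot be waved through. In short: your outline reproduces the architecture of the cited proofs, but the Hausdorffness step is only named, not proved, so the proposal does not yet constitute an independent proof of the theorem.
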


The {\it separatrix configuration} $\Sigma_c$ of a flow $(\D,\phi)$ is the union of all the separatrices $\Sigma$ of the flow together with an orbit belonging to each canonical region. The separatrix configuration $\Sigma_c$  of the flow $(\D,\phi)$ is said to be {\it topologically equivalent} to the separatrix configuration ${\tilde \Sigma}_c$ of the flow $(\D,\tilde{\phi})$ if there exists a homeomorphism from $\Sigma_c$ to $\tilde \Sigma_c$ which transforms orbits of $\Sigma_c$ into orbits of $\tilde{\Sigma}_c$, and orbits of $\Sigma$  into orbits of $\tilde{\Sigma}.$

For a proof of the next Theorem see \cite{markus, Neumann, peixoto}.

\begin{theorem}\label{mnp}
Let $(\D,\phi)$ and $(\D,\tilde{\phi})$ be two compactified Poincar\'e flows with finitely many separatrices coming from two polynomial vector fields \eqref{e2}. Then they are topologically equivalent if and only if their separatrix configurations are topologically equivalent.
\end{theorem}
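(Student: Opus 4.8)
The plan is to prove both implications, treating the forward direction as a routine invariance check and concentrating the substantial work on the backward (extension) direction. For the forward implication I would suppose $h\colon \D\to\D$ is a homeomorphism realizing the topological equivalence of $p(\X)$ and $p(\Y)$; by definition it satisfies $h(\aS^1)=\aS^1$ and carries orbits to orbits, preserving or reversing all time senses simultaneously. The argument is that each of the four ingredients defining a separatrix is a topological invariant: orbits lying in $\aS^1$ are carried to orbits lying in $\aS^1$ since $h$ preserves infinity; finite singular points go to finite singular points; limit cycles, being isolated periodic orbits, go to limit cycles; and because the local sectorial decomposition at a singular point (hyperbolic, parabolic, elliptic) is invariant under homeomorphisms and under time reversal, the separatrices of hyperbolic sectors map to separatrices of hyperbolic sectors. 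Hence $h(\Sigma)=\tilde\Sigma$, so $h$ permutes the connected components of $\D\setminus\Sigma$, i.e. the canonical regions, and sending the chosen representative orbit in each canonical region to its image exhibits $h$ as a topological equivalence $\Sigma_c\to\tilde\Sigma_c$.

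For the backward implication I would start from a homeomorphism $g\colon\Sigma_c\to\tilde\Sigma_c$ carrying orbits to orbits and construct a global homeomorphism $h\colon\D\to\D$ extending it. The restriction of $g$ to $\Sigma$ identifies $\Sigma$ with $\tilde\Sigma$, and since the canonical regions are precisely the connected components of $\D\setminus\Sigma$, the map $g$ induces a bijection between the canonical regions of $\phi$ and those of $\tilde\phi$ matching their boundaries. By Theorem~\ref{basic} the flow restricted to each canonical region is ${\mathcal C}^k$ parallel, hence ${\mathcal C}^k$-equivalent to one of the strip, annular, or spiral models. The role of the representative orbit recorded in $\Sigma_c$ is exactly to encode which of the three types occurs, so corresponding canonical regions are forced to be of the same parallel type.

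The core step is to extend $g$ across each pair of corresponding canonical regions and then glue the pieces together. For a fixed pair $(C,\tilde C)$ I would transport both restricted flows to the same normal form, in which every orbit is parametrized by a single transversal coordinate; a boundary homeomorphism that respects the flow direction and the behaviour at the ends of the region then extends canonically to the interior by a product map along orbits (radial, in the annular and spiral cases). Performing this construction for every canonical region while keeping each extension equal to $g$ on the shared separatrices yields a family of homeomorphisms on the closures of the regions that agree on their overlaps. Since finitely many separatrices decompose $\D$ into finitely many closed pieces whose interiors are the canonical regions, the gluing lemma for continuous maps agreeing on closed intersections assembles them into a single continuous bijection $h\colon\D\to\D$ with continuous inverse, carrying orbits of $p(\X)$ to orbits of $p(\Y)$.

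I expect the main obstacle to be the extension-and-gluing step of the backward direction. A single separatrix may lie on the boundary of a canonical region from two sides, or may bound several distinct regions, so one must arrange the interior extensions to be mutually compatible along every separatrix at once, and verify that the ends of adjacent parallel models fit together consistently around each singular point and along $\aS^1$. The finiteness hypothesis on the number of separatrices is precisely what keeps the incidence combinatorics tractable, reducing the construction to finitely many compatibility conditions, each of which can be satisfied using the explicit normal forms.
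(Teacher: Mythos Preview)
The paper does not supply its own proof of this theorem: it simply refers the reader to \cite{markus, Neumann, peixoto}. Your outline is essentially the classical Markus--Neumann argument found in those references --- the forward implication via invariance of each separatrix type, and the backward implication via the parallel-flow classification (Theorem~\ref{basic}) together with extension across canonical regions and gluing --- so in that sense your approach matches what the paper invokes rather than differing from it.

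Your sketch is broadly correct, and you have located the genuine difficulty in the right place. Two points deserve sharpening if you want a complete proof rather than an outline. First, the representative orbit in $\Sigma_c$ does more than record the parallel type: in Neumann's construction it is used to anchor the transversal coordinate so that the extensions on neighbouring canonical regions can be made to agree on their common separatrices; without this one cannot simply ``choose'' compatible extensions region by region. Second, the boundary of a canonical region need not be a manifold (separatrices can accumulate or a single separatrix can bound a region from both sides, as you note), so the ``gluing lemma for closed pieces'' must be applied to suitable completions of the canonical regions rather than to their closures in $\D$; this is where Neumann's argument does real work, and your final paragraph correctly flags it without yet carrying it out.
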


Note that from Theorem \ref{mnp} in order to classify the phase portraits in the Poincar\'e disc of a planar polynomial differential system having finitely many separatrices, it is enough to describe their separatrix configuration.

\end{document}